\newtheorem{theorem}{Theorem}[section]
\newtheorem{lemma}[theorem]{Lemma}
\newtheorem{proposition}[theorem]{Proposition}
\newtheorem{corollary}[theorem]{Corollary}
\newtheorem{predefinition}[theorem]{Definition}
\newenvironment{definition}{\begin{predefinition}\rm}{\end{predefinition}}
\newtheorem{preremark}[theorem]{Remark}
\newenvironment{remark}{\begin{preremark}\rm}{\end{preremark}}
\newtheorem{prenotation}[theorem]{Notation}
\newenvironment{notation}{\begin{prenotation}\rm}{\end{prenotation}}
\newtheorem{preexample}[theorem]{Example}
\newenvironment{example}{\begin{preexample}\rm}{\end{preexample}}
\newtheorem{preclaim}[theorem]{Claim}
\newtheorem{prequestion}[theorem]{Question}
\DeclareMathOperator*{\op}{\oplus}
\newcommand{\QQ}{\mathbb{Q}}
\newcommand{\C}{\mathbf{C}}
\newcommand{\NN}{\mathbb{N}}
\newcommand{\ZZ}{\mathbb{Z}}
\newcommand{\FF}{\mathbb{F}}
\newcommand{\LL}{\mathcal{L}}
\newcommand{\Proj}{\mathbb{P}}
\DeclareMathOperator{\End}{End}
\def\del{{\partial}}
\newcommand \Pic {{\mathop{\rm Pic}}}
\newcommand{\sing}{{\rm Sing}}
\newcommand{\aut}{{\rm Aut}}
\newcommand{\smooth}{{\rm sm}}
\newcommand{\calm}{{\mathcal M}}
\newcommand{\calh}{{\mathcal H}}
\newcommand{\til}[1]{{\widetilde{#1}}}
\renewcommand{\bar}[1]{{\overline{#1}}}
\newcommand{\calt}{{\mathcal T}}
\title[The $p$-rank $0$ stratum of the moduli space of cyclic covers]
{The boundary of the $p$-rank $0$ stratum of the moduli space of cyclic covers of the projective line}
\date{}
\author{Ekin Ozman}
\address{Bogazici University, Faculty of Arts and Sciences, Bebek, Istanbul, 34342, Turkey}
\email{ekin.ozman@boun.edu.tr}
\author{Rachel Pries}
\address{Department of Mathematics, 
Colorado State University, 
Fort Collins, CO 80523-1874, USA}
\email{pries@colostate.edu}
\author{Colin Weir}
\address{The Tutte Institute for Mathematics and Computing, Ottawa, Ontario, Canada}
\email{colinoftheweirs@gmail.com}
\thanks{Pries was supported by 
NSF grants DMS-11-01712 and DMS-15-02227 during the initial development of this project and 
by NSF grant DMS-19-01819 during its completion.  Ozman was partially supported by 
Scientific and Research Council of Turkey [TUBITAK-117F274].
We thank the anonymous referees for helpful comments.}
\begin{document}

\maketitle

\begin{abstract}
We study the $p$-rank stratification of the moduli space of cyclic degree $\ell$ covers of the projective line 
in characteristic $p$ for distinct primes $p$ and $\ell$.  
The main result is about the intersection of the $p$-rank $0$ stratum with the boundary of the moduli 
space of curves.  When $\ell=3$ and $p \equiv 2 \bmod 3$ is an odd prime,
we prove that there exists a smooth trielliptic curve in characteristic $p$, 
for every genus $g$, signature type $(r,s)$ and $p$-rank $f$ satisfying the clear necessary conditions.

Keywords: curve, cyclic cover, trielliptic, Jacobian, $p$-rank, moduli space, boundary\\
MSC: primary:  11G20, 14D20, 14H10, 14H40; secondary: 11G10, 14H30, 14H37, 14K10
\end{abstract}
	
\section{Introduction} \label{sec:intro}

Suppose $Y$ is a smooth projective connected curve of genus $g$ defined over an algebraically closed field $k$ of 
characteristic $p >0$. 
The {\em $p$-rank} of $Y$ is the integer $f$ such that $p^f$ is the number of $p$-torsion points of the Jacobian of $Y$.  It is known that $0 \leq f \leq g$.

Let $g \geq 2$.
Consider the moduli space $\calm_g$ of smooth curves of genus $g$ over $k$ and its Deligne-Mumford compactification
$\overline{\calm}_g$.
Consider the boundary $\delta \calm_g = \overline{\calm}_g - \calm_g$ 
of $\calm_g$; its points represent singular stable curves of genus $g$.

It is a compelling problem to understand the geometry of 
the $p$-rank $f$ stratum $\overline{\calm}_g^f$ of $\overline{\calm}_g$. 
For example, in most cases, it is not known whether $\overline{\calm}_g^f$ is irreducible.

It is known, by \cite[Theorem 2.3]{FV}, that every irreducible component of $\overline{\calm}_g^f$ has dimension $2g- 3 + f$.  
By \cite[Lemma 3.2]{AP:monoprank}, every irreducible component of $\overline{\calm}_g^f$ contains an open dense subset which lies in $\calm_g^f$.
It follows that there exists a smooth curve of genus $g$ and $p$-rank $f$ defined over $\overline{\mathbb F}_p$ 
for every prime $p$ and pair of integers $g$ and $f$ such that $0 \leq f \leq g$.

The proof of \cite[Theorem 2.3]{FV} uses properties of the intersection of the $p$-rank strata with the boundary.
By \cite[Lemma 2.5]{FV}, see also \cite[Corollary 3.6]{AP:monoprank}, 
every irreducible component ${\mathcal S}$ of the $p$-rank $0$ stratum $\overline{\calm}_g^0$ 
intersects $\delta \calm_g$; specifically:  \\
(i) ${\mathcal S}$ contains points that represent \emph{chains} of $g$
(supersingular) elliptic curves; and\\
(ii) ${\mathcal S}$ intersects every irreducible component of $\delta \calm_g$.

Analogously, for odd $p$, one can study the $p$-rank $f$ stratum $\overline{\calh}_g^f$ of the moduli space $\calh_g$ of hyperelliptic curves of genus $g$.
By \cite[Proposition 2]{GP}, every irreducible component of $\overline{\calh}_g^f$ has dimension $g-1+f$. 
Every irreducible component of $\overline{\calh}_g^f$ contains an open dense subset which lies in $\calh_g^f$ \cite[Lemma~3.2]{APhypmono}.
It follows that there exists a smooth hyperelliptic curve of genus $g$ 
and $p$-rank $f$ defined over $\overline{\mathbb F}_p$ 
for every odd prime $p$ and pair of integers $g$ and $f$ such that $0 \leq f \leq g$.

The proof of these facts for the hyperelliptic locus also uses the intersection of the $p$-rank strata 
with the boundary $\delta \calh_g = \overline{\calh}_g - \calh_g$. 
Every irreducible component ${\mathcal S}$ of the $p$-rank $0$ stratum $\overline{\calh}_g^0$ intersects 
$\delta \calh_g$ by \cite[Theorem 3.11]{APhypmono}; specifically: \\
(i)' ${\mathcal S}$ contains points that represent \emph{trees} of $g$
(supersingular) elliptic curves. 

However, it is not known whether ${\mathcal S}$ intersects every irreducible component of
$\delta \calh_g$.

In this paper, for an odd prime $\ell$ with $\ell \not = p$, 
we study analogous questions about the $p$-ranks
of curves that admit a $\ZZ/\ell \ZZ$-cover 
of the projective line ${\mathbb P}^1$. 
Let $\calt_{\ell, g}$ denote the moduli space of such $\ZZ/\ell \ZZ$-covers
and let $\bar{\calt}_{\ell, g}$ denote its compactification.
The irreducible components of 
$\calt_{\ell, g}$ are indexed not only by the degree $\ell$ and the genus $g$, but 
also by the discrete data of the inertia type, which determines the signature type.
In Proposition~\ref{Tpurity}, 
we compute a lower bound for the dimension of each irreducible component of the $p$-rank strata of 
$\calt_{\ell, g}$, in terms of the signature type.  

The first topic we study is how the $p$-rank $0$ stratum $\bar{\calt}_{\ell, g}^0$ of the moduli space $\bar{\calt}_{\ell, g}$ intersects the boundary $\delta \calm_g$.
The first main result of the paper is:
\begin{theorem} \label{Tintro} [See Theorem~\ref{Cbreakupf=0}]
Every irreducible component ${\mathcal S}$ of $\bar{\calt}_{\ell, g}^0$ contains a point representing a curve 
of compact type which has at least ${\rm dim}({\mathcal S}) + 1$ components.
\end{theorem}

The geometric conclusion from Theorem~\ref{Tintro} is not as strong 
as the analogous result in the hyperelliptic case.
This prevented us from using Theorem~\ref{Tintro} to find the dimension of the irreducible components of $\calt_{\ell, g}^f$ in general, see Remark~\ref{Rsignificance} and Section~\ref{Sboundaryproblem}.

For this reason, in Section~\ref{Strielliptic}, we specialize to the case $\ell = 3$.
In Proposition~\ref{Cf=B-2}, for all $g \geq 2$ and all primes $p \geq 5$, 
we generalize a result of Bouw by proving
that every component of the moduli space of trielliptic curves of genus $g$ contains a 
curve whose $p$-rank is not the maximum.
Then we prove:
\begin{theorem} \label{Tintro2} [See Theorem~\ref{Trsexistence}]
For every odd $p \equiv 2 \bmod 3$, every $g \in \NN$, every trielliptic signature type $(r,s)$ for $g$, 
and every $f$ (satisfying the clear necessary conditions that $f$ is even and $0 \leq f \leq 2{\rm min}(r,s)$), 
there exists a smooth trielliptic curve defined over $\bar{\FF}_p$ 
with genus $g$, signature type $(r,s)$ and $p$-rank $f$; furthermore,  
the dimension of at least one irreducible component of the $p$-rank 
$f$ stratum of $\calt_{3, g, (r,s)}$
equals the lower bound from Proposition~\ref{Tpurity}. 
\end{theorem}

In Corollary~\ref{Cr2total}, we strengthen Theorem~\ref{Tintro2} 
when $g$ is small for all odd primes $p \equiv 2 \bmod 3$
using an application of Theorem~\ref{Tintro}.

\section{Background} \label{sec:bckgrnd}

In this section, we include necessary material about cyclic covers and the $p$-rank.

\subsection{Stable $\ZZ/\ell \ZZ$-covers of a genus zero curve} 

Let $k$ be an algebraically closed field of characteristic $p > 0$ and 
let $S$ be an irreducible scheme over $k$.
Let $G=\ZZ/\ell \ZZ$ be a cyclic group of odd prime order $\ell \not = p$.  Let $G^*=G-\{0\}$.

Let $\psi:Y \to S$ be a semi-stable curve.  
If $s \in S$, let $Y_s$ denote the fiber of $Y$ over $s$.  Let $\sing_S(Y)$
be the set of $z \in Y$ for which $z$ is a singular point of the fiber
$Y_{\psi(z)}$.

A {\em mark} $R_\Xi$ on $Y/S$ is a closed subscheme of
$Y-\sing_S(Y)$ which is finite and \'etale over $S$.    The {\em
  degree} of $R_\Xi$ is the number of points in any geometric fiber of $R_\Xi \to S$.
A marked semi-stable curve $(Y/S, R_\Xi)$ is {\em stably marked} if
every geometric fiber of $Y$ satisfies the following condition: for
each irreducible component $Y_0$ of genus zero, 
$\# (Y_0 \cap (\sing_S(Y) \cup R_\Xi)) \ge 3$;
(if the fibers of $Y/S$ have genus $1$, we also assume that the degree of $R_\Xi$ is positive).

Consider a $G$-action $\iota_0: G \hookrightarrow \aut_S(Y)$ on $Y$.  
Let $R$ denote the ramification locus of $Y \to Y/\iota_0(G)$. 
The smooth ramification locus is $R_{\smooth} := R - (R\cap \sing_S(Y))$.  We say
that $(Y/S,\iota_0)$ is a {\em stable $G$-curve} if $Y/S$ is a
semi-stable curve; if $\iota_0: G \hookrightarrow \aut_S(Y)$ is an action of $G$;
if $R_{\smooth}$ is a mark on $Y/S$; and
if $(Y/S,R_{\smooth})$ is stably marked.  

If $z \in \sing_S(Y)$, let  
$Y_{z,1}$ and $Y_{z,2}$ denote the two components of the formal completion of $Y_{\psi(z)}$ at $z$.
A stable $G$-curve $(Y/S, \iota_0)$ is {\em admissible} if
the following conditions are satisfied for every geometric point $z \in R\cap{\rm Sing}_S(Y)$:  
\begin{enumerate}
\item $\iota_0(1)$ stabilizes each branch $Y_{z,i}$; 
\item $z$ is a ramification point of the restriction of $\iota_0$ to $Y_{z,i}$;
\item the characters of the action of $\iota_0$ on the tangent spaces
of $Y_{z,1}$ and $Y_{z,2}$ at $z$ are inverses. 
\end{enumerate}

Suppose that $(Y/S, \iota_0)$ is an admissible stable $G$-curve.
Then $Y/\iota_0(G)$ is also a stably marked curve \cite[Proposition~1.4]{ekedahlhurwitz}.  
The mark on $Y/\iota_0(G)$ is the smooth branch locus $B_{\smooth}$, 
which is the (reduced subscheme of) the image of $R_{\smooth}$ under the morphism $Y \to Y/\iota_0(G)$.
Let $n$ be the degree of $R_{\smooth}$ (the number of smooth ramification points).  
We suppose from now on that $Y/\iota_0(G)$ has arithmetic genus $0$.
By the Riemann-Hurwitz formula, the arithmetic
genus of each fiber of $Y$ is 
\begin{equation} \label{Egenus}
g=(n-2)(\ell-1)/2.
\end{equation}

\subsection{The inertia type and signature type}

Let $s$ be a geometric point of $S$
and let $a$ be a point of the fiber $R_{\smooth,s}$.  Then $G= \ZZ/\ell\ZZ$ acts on the tangent space of $Y_s$ at
$a$ via a character $\chi_a: G \to k^*$.  In particular, there is
a unique choice of $\gamma_a \in (\ZZ/\ell \ZZ)^*$ so that $\chi_a(1)
= \zeta_\ell^{\gamma_a}$.  
We say that $\gamma_a$ is the {\em canonical generator of inertia} at $a$.  
The {\em inertia type} of $(Y/S,\iota_0)$ is the multiset
$\bar \gamma = \{\gamma_{a}  \mid  a \in R_{\smooth,s}\}$.   It is independent of the choice of $s$.
By Riemann's existence theorem, there exists a cover $(Y, \iota_0)$ with inertia type $\{\gamma_{a} \mid  a \in R_{\smooth,s}\}$ 
if and only if 
$\sum_{a \in R_{\smooth,s}} \gamma_a = 0 \in \ZZ/\ell \ZZ$ \cite[Theorem 2.13]{Vbook}.

A labeling of a mark $R_\Xi$ of degree $n$ is a bijection $\eta$ between $\{1,
  \cdots, n\}$ and the irreducible components of $R_\Xi$.
A labeling of an admissible stable $G$-curve $(Y/S,\iota_0)$ is a
labeling $\eta$ of $R_{\smooth}$.
There is an induced labeling $\eta_0:\{1, \ldots, n\} \to B_{\smooth}$.

If $(Y/S,\iota_0,\eta)$ is a labeled $G$-curve, the {\em class vector}
is the map of sets $\gamma:\{1, \ldots, n\} \to G^*$ such that
$\gamma(i) = \gamma_{\eta(i)}$.  We write $\gamma = (\gamma(1),
\ldots, \gamma(n))$.  If $\gamma$ is a class vector, we denote
its inertia type by $\bar\gamma: G^* \to \ZZ_{\ge 0}$ 
where $\bar\gamma(h) = \#\{i \mid 1 \leq i \leq n, \ \gamma(i) = h\}$ for all $h\in G^*$.

Let $\zeta_{\ell} \in k$ be a primitive $\ell$th root of unity.  
The automorphism $\iota_0(1)$ induces an action on $H^0(Y_s, \Omega^1)$.
Let $\LL_i$ be the $\zeta_\ell^i$-eigenspace of $H^0(Y_s, \Omega^1)$, for $0 \leq i \leq \ell-1$.
There is an eigenspace decomposition:
\[H^0(Y_s, \Omega^1)=\op\limits_{i=0}^{\ell-1} \LL_i.\] 
Let $s_i ={\rm dim}(\LL_i)$.  Then $\LL_0=\{0\}$ and $s_0=0$ 
since $Y/\iota_0(G)$ has genus $0$.
The {\em signature type} is $(s_1, \ldots, s_{\ell-1})$.
It is locally constant on $S$.
For an integer $t$, let $\langle \frac{t}{\ell} \rangle = \frac{t}{\ell} - \lfloor \frac{t}{\ell} \rfloor$ denote the 
fractional part of $\frac{t}{\ell}$. 

\begin{lemma} \label{Lsignature}
For $1 \leq i \leq \ell -1$, 
\begin{equation*} 
s_i=-1 + \sum\limits_{j=1}^{n} \langle \frac{- i a_j}{\ell} \rangle. 
\end{equation*}
\end{lemma}

\begin{proof}
This can be found in \cite[Lemma 2.7, Section 3.2]{moonen}, or deduced 
from earlier results \cite[Lemma 4.3]{Bouw} (the negative sign does not appear for the action on ${\mathcal O}$) 
or \cite[Proposition~1]{kani}.
\end{proof}

\subsection{Restrictions on the $p$-rank}

Let $\mu_p$ be the kernel of Frobenius on ${\mathbb G}_m$.
The {\em $p$-rank} of a semi-abelian variety $A'$ over $k$ is $f_{A'}={\rm dim}_{{\mathbb F}_p}{\rm Hom}(\mu_p, A')$. 
If $A'$ is an extension of an abelian variety $A$ by a torus $T$, 
then $f_{A'}=f_A + {\rm rank}(T)$.

For an abelian variety $A$,     
the $p$-rank can also be defined as the integer $f_A$ such that 
the number of $p$-torsion points in $A(k)$ is $p^{f_A}$.
If $A$ has dimension $g_A$ then $0 \leq f_A \leq g_A$.
The $p$-rank of a stable curve $Y$ is that of ${\rm Pic}^0(Y)$.

Recall that $\ell \not = p$ is prime.  Let $e$ be the order of $p$ in the multiplicative group  $(\mathbb Z/ \ell \mathbb Z)^\times$. 

\begin{lemma}\label{lem:edivf} 
Suppose that $Y \to \Proj_k^1$ is a $\ZZ/\ell \ZZ$-cover.  Let $f$ be the $p$-rank of $Y$.
\begin{enumerate}
\item Then $f$ is divisible by $e$, the order of $p$ modulo $\ell$.
\item If $\ell > 3$ or if $\ell =3$ and $p \equiv 1 \bmod 3$, then $f \not = g-1$.
\end{enumerate}
\end{lemma}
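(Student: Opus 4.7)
The plan is to prove part (1) via the $\ZZ[\zeta_\ell]$-action on $J = \Jac(Y)$, and to establish part (2) by a Newton-polygon case analysis organized by the value of $e$.

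First, for part (1), the $\ZZ/\ell\ZZ$-action on $Y$ descends to an action on $J$. Because $Y/\iota_0(\ZZ/\ell\ZZ) = \Proj^1$ has genus zero, the norm element $1 + \iota_0(1) + \cdots + \iota_0(1)^{\ell-1}$ kills $J$, and the action factors through a ring embedding $\ZZ[\zeta_\ell] \hookrightarrow \End(J)$. The $p$-adic Tate module $T_p J$ is then a torsion-free $\ZZ_p[\zeta_\ell]$-module of $\ZZ_p$-rank $f$. Since $\ell \neq p$, the ring $\ZZ_p[\zeta_\ell] \cong \prod_{\mathfrak p \mid p} \mathcal O_{\mathfrak p}$ is a product of unramified extensions of $\ZZ_p$ of degree $e$, so each summand of $T_p J$ has $\ZZ_p$-rank divisible by $e$. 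Hence $e \mid f$.

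Next, for part (2), suppose for contradiction that $f = g - 1$. Since $J$ is principally polarized, the Newton polygon of $J[p^\infty]$ is self-dual: slopes $0$ and $1$ each carry multiplicity $f = g-1$, and the remaining $2g - 2f = 2$ multiplicities fall on intermediate slopes summing to $1$. The Frobenius-orbit argument from part (1), applied inside each slope piece of the Dieudonn\'e module, shows that every slope multiplicity is divisible by $e$.

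If $e \geq 3$, then $2 < e$, so no intermediate slope is possible, forcing $f = g$ and a contradiction.

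If $e = 1$, i.e., $p \equiv 1 \bmod \ell$, the decomposition $J[p^\infty] = \op_{i=1}^{\ell-1} J_i$ holds already over $\ZZ_p$, where $\iota_0(1)$ acts on $J_i$ by multiplication by $\zeta_\ell^i$. The piece $J_i$ has height $s_i + s_{-i}$ and dimension $s_{-i}$, and Mazur's inequality gives $p\text{-rank}(J_i) \leq s_i$, with equality iff $J_i$ is ordinary. Summing yields $f \leq g$, with equality iff every $J_i$ is ordinary. For $f = g - 1$, exactly one index $i_0$ has $p$-rank deficit $1$. But Serre duality $J_i^\vee \cong J_{-i}$ forces $J_{i_0}$ and $J_{-i_0}$ both ordinary (deficit $0$) or both non-ordinary (deficit $\geq 2$); since $i_0 \neq -i_0$ (using $\ell$ odd), either alternative contradicts a deficit of exactly $1$.

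If $e = 2$, by divisibility the two intermediate slopes combine into a single slope $1/2$ of multiplicity $2$ inside one self-dual Frobenius orbit; every other orbit must be ordinary. Part (1) further forces $2 \mid f$, so $f = g - 1$ requires $g$ odd. Since $g = (n-2)(\ell-1)/2$, both $n - 2$ and $(\ell - 1)/2$ must then be odd. I next invoke the pairing identity
\[
s_i + s_{\ell - i} = n - 2 \qquad \text{for all } i \in (\ZZ/\ell\ZZ)^*,
\]
which follows from (\ref{E:eigenspace}) via the relation $\{i a_j/\ell\} + \{(\ell - i) a_j / \ell\} = 1$. This makes every orbit height $h = s_i + s_{-i}$ equal to the odd number $n - 2$. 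But ordinariness of an orbit requires its height to be even (so that the slope-$0$ and slope-$1$ multiplicities, each equal to $h$, are divisible by $e = 2$); therefore no orbit can be ordinary, and each must contribute intermediate multiplicity at least $2$. The total intermediate multiplicity is then at least $\ell - 1$, which exceeds $2$ whenever $\ell > 3$, a contradiction. For $\ell = 3$ there is only one orbit and $\ell - 1 = 2$, so no contradiction arises, matching the excluded case of the hypothesis.

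The main obstacle is the $e = 2$ case: divisibility of $f$ by $2$ alone does not exclude $f = g - 1$ when $g$ is odd, and the resolution for $\ell > 3$ crucially uses the Riemann--Hurwitz pairing identity $s_i + s_{\ell - i} = n - 2$, which forces all $(\ell - 1)/2$ orbit heights to share a common parity and obstructs the single-non-ordinary configuration needed.
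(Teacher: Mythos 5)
Your proof is correct. Part (1) is essentially the paper's argument in different clothing: the paper extracts $e \mid f$ from the $\ZZ[\zeta_\ell]$-action on the slope-zero factor of the Dieudonn\'e--Manin decomposition, you from the physical Tate module; both reduce to the slope-zero part being a module over $\ZZ[\zeta_\ell]\otimes\ZZ_p \cong \prod_{\mathfrak p \mid p}\mathcal{O}_{\mathfrak p}$, a product of unramified degree-$e$ extensions. Part (2), however, takes a genuinely different route. The paper's proof is a one-line obstruction: if $f=g-1$, the middle Newton slope is a single $G_{1,1}$, and $\QQ(\zeta_\ell)\otimes\QQ_p$ is pushed into the quaternion division algebra over $\QQ_p$, whose commutative subalgebras are fields of $\QQ_p$-degree at most $2$ --- but this disposes of all cases only if one knows that every local factor $L_{\mathfrak p}$ acts nontrivially on that middle piece. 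You instead organize the argument by $e$: for $e\ge 3$ the divisibility of all slope multiplicities by $e$ already finishes; for $e=1$ you use that the polarization pairs $J_i$ with $J_{-i}$, so a $p$-rank deficit cannot be concentrated in a single eigenspace (this is precisely the input that rules out $\QQ_p\times\QQ_p$ sitting inside a division algebra); and for $e=2$ you use the pairing identity $s_i+s_{\ell-i}=n-2$ and a parity count to force each of the $(\ell-1)/2$ Frobenius orbits to contribute local-local height at least $2$. That last case is where your extra work earns its keep: when $e=2$ the number field $\QQ(\zeta_\ell)$ \emph{does} embed into the unramified quadratic extension of $\QQ_p$ and hence into the local quaternion algebra, so the bare subfield obstruction does not by itself eliminate $\ell>3$, $p\equiv -1\bmod \ell$, whereas your orbit-counting does. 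The only cosmetic slip is in that case: what you call the ``orbit height'' $s_i+s_{-i}=n-2$ is half the height of the orbit's $p$-divisible group (which is $2(n-2)$); the facts you actually use --- that the \'etale and multiplicative heights of each orbit are at most $n-2$ and divisible by $2$, hence at most $n-3$ when $n-2$ is odd --- are correct as stated.
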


\begin{proof} 
The action of $\ZZ/\ell \ZZ$ on $Y$ induces an action of $\zeta_\ell$ on $J={\rm Jac}(Y)$ and its $p$-divisible group $J[p^\infty]$.
So $\ZZ[\zeta_\ell] \hookrightarrow {\rm End}(J[p^\infty])$.
If $(c, d)$ is a pair of relatively prime non-negative integers, and $\lambda=d/(c+d)$, 
let $G_{\lambda}$ denote a $p$-divisible group of codimension $c$, dimension $d$, and thus height $c + d$. 
By \cite{maninthesis}, the Dieudonn\'e-Manin classification, there is an isogeny of $p$-divisible groups
$J[p^\infty] \sim \oplus_\lambda G_\lambda^{m(\lambda)}$.
The action of $\zeta_\ell$ stabilizes every slope factor $G_{\lambda}^{m(\lambda)}$  of $J[p^\infty]$.
Hence $\ZZ[\zeta_\ell] \hookrightarrow \End(G_\lambda^{m(\lambda)})$.
If $m(\lambda)>0$, 
this yields an inclusion $\QQ(\zeta_\ell) \otimes \QQ_p \hookrightarrow {\rm Mat}_{m(\lambda)} (D_{\lambda}) $ where $D_{\lambda}$ 
is the $\QQ_p$-division algebra with Brauer invariant $\lambda \in \QQ/\ZZ$. 

\begin{enumerate}
\item If $\lambda=0$ then $m(\lambda)=f$.
Let $L_p$ be the completion of $L=\QQ(\zeta_\ell)$ at a prime lying above $p$.
Then $[L_p: \QQ_p]=e$ divides $m(\lambda)$. 

\item 
Suppose $f=g-1$.  Then the slope $1/2$ factor of $J[p^\infty]$ is the $p$-divisible group of a supersingular elliptic curve.
So $\QQ(\zeta_\ell) \subset E$ where $E$ is the endomorphism algebra of a supersingular elliptic curve.
Then $E$ is a quaternion algebra ramified exactly at $\infty, p$.
The only number fields contained in $E$ are quadratic fields inert or ramified at $p$. 
This gives a contradiction if $\ell >3$ or if $\ell =3$ and $p \equiv 1 \bmod 3$. \qedhere
\end{enumerate}
\end{proof}

The $p$-rank of $Y$ equals the stable rank of the Cartier operator $\C$.
If $\omega \in H^0(Y, \Omega^1)$, then $\C(\zeta_\ell^{pi} \omega)=\zeta_\ell^i\C(\omega)$.
Then $\C(\LL_i) \subset \LL_{\sigma(i)}$ where $\sigma$ is the permutation of $\ZZ/\ell\ZZ-\{0\}$ which sends $i$ to $p^{-1}i$. 
The cycle structure of $\sigma$ is determined by the splitting of $p$ in $\ZZ[\zeta_{\ell}]$.
Recall that $e$ is the order of $p$ modulo $\ell$. 
There are $(\ell-1)/e$ primes of $\ZZ[\zeta_\ell]$ lying over $p$ with residual degree $e$. 
Each orbit of $\C$ on $\{\LL_i\}$ has cardinality $e$.  Let $O$ denote the set of orbits.  
The contribution to the $p$-rank from each of the $e$ eigenspaces in an orbit $o \in O$ 
is bounded by the minimum of $s_i ={\rm dim}(\LL_i)$ for $\LL_i$ in $o$. 

Bouw used these ideas to find an upper bound on the $p$-rank,
which depends only on $p$, $\ell$, and the inertia type $\bar \gamma$; it is:
\begin{equation} \label{Ebound}
B(\bar \gamma):= \sum_{o \in O} e \cdot {\rm min}\{s_i \mid \LL_i \in o\}.
\end{equation}

\begin{theorem} \label{LboundB} (Bouw)
The integer $B(\bar \gamma)$ is an upper bound for the $p$-rank of a $\ZZ/\ell \ZZ$-cover of ${\mathbb P}^1$
with inertia type $\bar \gamma$, \cite[page 300, (1)]{Bouw}.
This upper bound $B(\bar \gamma)$ 
occurs as the $p$-rank of a $\ZZ/\ell \ZZ$-cover of ${\mathbb P}^1$ with inertia type $\bar \gamma$ 
if $p \geq \ell(n-3)$ \cite[Theorem 6.1]{Bouw}, 
or if $p = \pm 1 \bmod \ell$ \cite[Propositions~7.4, 7.8]{Bouw}, or if $n=4$ \cite[Proposition~7.7]{Bouw}.
\end{theorem}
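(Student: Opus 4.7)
The statement has two independent assertions --- the upper bound and its sharpness in three distinct regimes --- which I would attack separately.

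\emph{Upper bound.} Using the identity $f = \mathrm{rk}_{\mathrm{st}}(\C)$ recalled before the statement, I would decompose along the orbit structure of $\sigma$. Since $\C$ is $p^{-1}$-semi-linear with $\C(\LL_i) \subset \LL_{\sigma(i)}$, the stable image of $\C$ splits as a direct sum over $\sigma$-orbits $o = \{i_1, \ldots, i_e\}$ of its restriction to each block $V_o := \bigoplus_{i \in o} \LL_i$. On such a block, $\C$ is the cycle $\LL_{i_1} \to \LL_{i_2} \to \cdots \to \LL_{i_e} \to \LL_{i_1}$, so the $e$-th iterate $\C^e$ restricts to a $p^{-e}$-linear operator on each $\LL_{i_j}$ that factors through every $\LL_{i_k}$ in the orbit; its stable image therefore has dimension at most $\min_{k} \dim \LL_{i_k}$. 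Summing the $e$ equal contributions gives $\mathrm{rk}_{\mathrm{st}}(\C|_{V_o}) \leq e \cdot \min_{i \in o} s_i$, and summing over orbits yields $f \leq B(\bar\gamma)$.

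\emph{Sharpness.} I would treat the three regimes in order of increasing difficulty. For $n=4$ the cover moves in a one-parameter family indexed by the cross-ratio of the four branch points, and a basis of $H^0(Y,\Omega^1)$ can be written down explicitly using \eqref{E:eigenspace}; the Hasse-Witt matrix becomes a matrix of truncated hypergeometric polynomials, and specialising to a convenient fibre (for instance a Fermat-type cover) should realise the bound. For $p \equiv 1 \pmod \ell$ (resp.\ $p \equiv -1$) the $\sigma$-orbits are singletons (resp.\ pairs $\{i,\ell-i\}$), so $B(\bar\gamma)$ reduces to a sum of single-eigenspace contributions $s_i$ or $\min(s_i, s_{\ell-i})$; the Cartier operator on each orbit is essentially semi-linear on one eigenspace, and one exhibited example combined with openness of the non-vanishing locus of the relevant Hasse-Witt minor produces a cover attaining the bound. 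For general $n \geq 5$ with $p \geq \ell(n-3)$, I would induct on $n$: colliding two smooth branch points $i,j$ into a single branch point of inertia class $\gamma(i)+\gamma(j)$ produces (via Section~2) an admissible stable $G$-curve whose normalisation has one component with $n-1$ branch points and one with $4$. The $p$-rank of such a singular curve is additive over its components, so by induction and the $n=4$ base case one obtains a stable $G$-curve of $p$-rank $B(\bar\gamma)$, which then smooths to a $\ZZ/\ell\ZZ$-cover of $\Proj^1$ of the same $p$-rank by the purity of the $p$-rank stratification.

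The main obstacle is the third case: one must verify the combinatorial identity $B(\bar\gamma) = B(\bar\gamma_1) + B(\bar\gamma_2)$ for the inertia types on the two components of the degeneration, which is not automatic because taking minima over $\sigma$-orbits does not commute with partitioning the branch points. The hypothesis $p \geq \ell(n-3)$ is exactly what forces these minima to be realised already on the length-$(n-1)$ piece rather than being created by the merger; clarifying this, via a careful analysis of the eigenspace dimensions through \eqref{E:eigenspace} on each sub-inertia type, is where I expect the bulk of the work to lie.
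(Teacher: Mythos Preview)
The paper does not give its own proof of Theorem~\ref{LboundB}; it is stated as a result of Bouw, with each assertion carrying a pinpoint citation to \cite{Bouw}. So there is no in-paper argument to compare your proposal against.

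That said, two remarks on your sketch. Your upper-bound argument is correct and is exactly the reasoning implicit in the paragraph preceding the theorem (the orbit decomposition of $\C$ on the eigenspaces $\LL_i$). For the sharpness claims you have identified the right structural features in each regime and, in the $p \geq \ell(n-3)$ case, correctly flagged the additivity $B(\bar\gamma)=B(\bar\gamma_1)+B(\bar\gamma_2)$ as the crux of any degeneration approach. One point to fix: the passage from the singular admissible $G$-curve to a smooth cover does not use ``purity of the $p$-rank stratification'' but rather \emph{lower semi-continuity} of the $p$-rank---a generization has $p$-rank at least that of the special fibre, and since $B(\bar\gamma)$ is already the maximum possible by the first part, equality is forced. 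Purity (that $p$-rank jumps occur in codimension~$1$) does not by itself yield this direction.
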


\section{The moduli space of $\ZZ/\ell \ZZ$-covers and its boundary}

In this section, we introduce the material needed to study $p$-ranks of cyclic covers from a moduli-theoretic approach.
Recall that $G=\ZZ/\ell \ZZ$.

\subsection{Moduli spaces of stable $\ZZ/\ell \ZZ$-covers} \label{SAmoduli}

We define moduli functors on the category of schemes over $k$ whose $S$-points represent 
the listed objects:
\begin{enumerate}
\item $\overline{\calt}_{\ell, g}$: admissible 
stable $G$-curves $(Y/S, \iota_0)$ with $Y/S$ of genus $g$.
\item $\til{\calt}_{\ell, g}$: $(Y/S, \iota_0)$ as above, together with a labeling $\eta$ of the smooth 
ramification locus.
\item $\til{\calt}_{\ell, g; t}$:  $(Y/S, \iota, \eta)$ as above, together
with a mark $R_\Xi$ of degree $t$ such that $(Y/S,R_\Xi)$ is stably marked.
\end{enumerate}

Let $\calt_{\ell,g} \subset \overline{\calt}_{\ell, g}$ be the sublocus representing smooth $G$-curves.

Let $\gamma: \{1, \ldots, n\} \to (\ZZ/\ell \ZZ)^*$ be a class vector
of length $n=n(\gamma)$.  By \eqref{Egenus}, $\gamma$ determines the genus $g=g(\gamma)=(n-2)(\ell-1)/2$.
Let $\til{\calt}_{\ell, \gamma} \subset \til\calt_{\ell,g}$ be the substack
for which $(Y/S,\iota_0, \eta)$ has class vector $\gamma$. 
Let $\bar\calt_{\ell, \bar\gamma} \subset \bar\calt_{\ell,g}$ be the substack for
which $(Y/S,\iota_0)$ has inertia type $\bar\gamma$.

If two class vectors $\gamma$ and $\gamma'$ yield the same inertia type, so that
$\bar{\gamma} = \bar{\gamma}'$, then there is a permutation $\varpi$ of
$\{1, \ldots, n\}$ such that $\gamma' = \gamma \circ \varpi$.  This
relabeling of the branch locus yields an isomorphism
$\til\calt_{\ell, \gamma}  \simeq  \til \calt_{\ell, {\gamma \circ \varpi}}$.
Suppose $\gamma$ and $\gamma'$ differ by an
automorphism of $G$, so that there exists $\tau \in \aut(G)$ such
that $\gamma'= \tau \circ \gamma$.  This relabeling of
the $G$-action yields an isomorphism 
$\til\calt_{\ell, \gamma} \simeq  \til \calt_{\ell, \tau \circ \gamma}$.

\begin{lemma} \label{Lmoduli}
\begin{enumerate}
\item $\til\calt_{\ell,g}$ and $\bar\calt_{\ell, g}$ are smooth, proper
Deligne-Mumford stacks over $k$.  
\item $\calt_{\ell, g}$ is open and dense in $\bar \calt_{\ell, g}$.
\item The forgetful functor $\til\calt_{\ell, \gamma} \to
\bar\calt_{\ell, \bar\gamma}$ is \'etale and Galois.
\item $\bar \calt_{\ell, \bar \gamma}$ is an irreducible component of $\bar \calt_{\ell, g}$.
\item The dimension of $\calt_{\ell,{\bar \gamma} }$ is $n-3$.
\end{enumerate}
\end{lemma}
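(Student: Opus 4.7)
The plan is to identify each of the moduli functors with a stack of admissible cyclic $G$-covers of stably marked genus zero curves, as constructed in the Hurwitz-theoretic literature (Bertin--Romagny, Wewers, Abramovich--Corti--Vistoli), and then transfer standard structural results. Since such a cover is, away from discrete data, determined by the configuration of its $n$ smooth branch points, the geometry of these stacks is ultimately controlled by that of $\calm_{0,n}$ and $\bar{\calm}_{0,n}$.

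For (1), I would invoke the construction of the stack of admissible cyclic covers. It is Deligne--Mumford because the automorphism group of an admissible stable $G$-cover is an extension of a finite automorphism group of the stably marked base (finite thanks to the three-special-points condition on every genus zero component) by the deck group $G$. Properness follows from stable reduction for tame $G$-covers, applicable here since $\gcd(\ell,p)=1$. Smoothness is the key point: the admissibility condition at each singular ramification point---that the tangent characters on the two branches be inverse to each other---is exactly what guarantees that the $G$-cover deforms along the universal smoothing of the node, so the local deformation space factors as a product of a smooth deformation of the base curve and an independent, unobstructed smoothing parameter at each node. Part (2) then follows: the smooth locus is open (cut out by the nonvanishing of a discriminant of the branch locus) and dense (every admissible cover specializes from a smooth one by deforming the stably marked tree to a smooth $\Proj^1$, using that $\bar{\calm}_{0,n}$ is irreducible).

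For (3), the forgetful map $\til\calt_{\ell,\gamma}\to\bar\calt_{\ell,\bar\gamma}$ records an ordering of $R_{\smooth}$ compatible with the class vector $\gamma$; its fibers are torsors under the subgroup of $S_n$ that preserves $\gamma$, making the map \'etale and Galois. For (4) and (5), I would use the branch morphism: composing with the labeling, we obtain a map $\til\calt_{\ell,\gamma}\to\calm_{0,n}$ sending a labeled cover to its ordered branch locus. For $G$ cyclic of prime order, the cover is uniquely determined up to isomorphism by the ordered branch locus together with $\gamma$---this is Riemann's existence theorem in the abelian setting, where the Nielsen class reduces to the class vector with no further braid monodromy---so this map is finite \'etale onto an open subset of $\calm_{0,n}$. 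Since $\calm_{0,n}$ is irreducible of dimension $n-3$, the same holds for $\til\calt_{\ell,\gamma}$, and hence for $\calt_{\ell,\bar\gamma}$ by (3); this proves (5), and combined with (1) and (2) yields (4).

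The step I expect to require the most care is verifying that for $G=\ZZ/\ell\ZZ$ the Hurwitz stack with prescribed inertia type is connected, so that $\bar\calt_{\ell,\bar\gamma}$ is truly irreducible rather than a disjoint union indexed by some finer discrete invariant. This is classical for abelian groups via the transitivity of the (pure) braid action on Nielsen tuples with prescribed multiset of generators, but should be cited explicitly rather than reproved.
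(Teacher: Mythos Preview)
Your sketch is correct and follows exactly the standard route; the paper itself does not give an argument but simply refers to \cite[Lemmas~2.2--2.4]{AP:tri}, whose proofs proceed essentially as you outline (admissible covers as in Ekedahl/Wewers/Abramovich--Corti--Vistoli, with the branch map to $\bar\calm_{0,n}$ controlling the geometry).

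One small sharpening: for $G=\ZZ/\ell\ZZ$ with $\ell$ prime and a \emph{fixed} class vector $\gamma$ satisfying $\sum\gamma(i)=0$, the labeled cover is not merely finite \'etale over $\calm_{0,n}$ but in fact \emph{unique} up to $G$-equivariant isomorphism over each ordered configuration (this is the abelian case of Riemann's existence theorem you invoke). Hence the branch morphism $\til\calt_{\ell,\gamma}\to\bar\calm_{0,n}$ is a bijection on geometric points, and irreducibility of $\til\calt_{\ell,\gamma}$ (and thus of $\bar\calt_{\ell,\bar\gamma}$ via (3)) is immediate from the irreducibility of $\bar\calm_{0,n}$---no braid-orbit computation is needed. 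So the step you flag as requiring the most care is in fact the easiest one in this cyclic setting; your caution would be warranted for nonabelian $G$.
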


\begin{proof}
See \cite[Lemmas 2.2, 2.3, 2.4]{AP:tri}.
\end{proof}

\subsection{Clutching maps}

We review the clutching maps $\kappa_{g_1,g_2}$ and 
$\lambda_{g_1,g_2}$ of \cite{knudsen2}. Each of these is the restriction of a finite, unramified morphism
between moduli spaces of labeled curves.
They can be described in terms of their images on $S$-points for
an arbitrary $k$-scheme $S$.  We give explicit descriptions only
for sufficiently general $S$-points and defer to \cite{knudsen2} for
complete definitions.
A stable curve $Y$ has {\em compact type} 
if its dual graph is a tree or, equivalently, if ${\rm Pic}^0(Y)$ is
represented by an abelian scheme. 

For $i=1,2$, let $\gamma_i$ denote a class vector with length $n_i=n_i(\gamma_i)$
and let $g_i=g(\gamma_i)$.

\subsubsection{Clutching maps (compact type)}

There is a closed immersion \cite[Corollary 3.9]{knudsen2}
\[
\kappa_{g_1,g_2}: \bar \calm_{g_1; t_1} \times \bar \calm_{g_2; t_2}  \to \bar \calm_{g_1+g_2; t_1+t_2-2}.
\]
This clutching map
glues two curves $Y_1/S$ and $Y_2/S$ together to form a curve $Y/S$ by
identifying the last section of $Y_1$ and the first section of $Y_2$ in an ordinary double point.

As seen in \cite[Section 2.3]{AP:tri}, the clutching map extends to the moduli space of labeled $\ZZ/\ell \ZZ$-curves
as follows.  Let $g=g_1+g_2$ and $n=n_1+n_2-2$ and
$$\gamma=(\gamma_1(1), \ldots, \gamma_1(n_1-1), \gamma_2(2), \ldots, \gamma_2(n_2)).$$  
If $(Y_i/S, \iota_{0,i}, \eta_i)$ is a labeled $G$-curve with class vector $\gamma_i$, for $i=1,2$, 
then the clutched curve $Y/S$ has genus $g$ and admits a $G$-action $\iota_0$ and a labeling $\eta$
with class vector $\gamma$.
Moreover, $Y/S$ can be deformed to a smooth $G$-curve if and only if the $G$-action is admissible,
i.e., if and only if $\gamma_1(n_1)$ and $\gamma_2(1)$ are inverses \cite[Proposition 2.2]{ekedahlhurwitz}.  
In this situation, we write 
\begin{equation*}
\kappa_{g_1,g_2}: \til\calt_{\ell, \gamma_1} \times \til\calt_{\ell, \gamma_2}  \to \til\calt_{\ell, \gamma}.
\end{equation*}

By \cite[Ex.\ 9.2.8]{blr},
$\Pic^0(Y) \simeq \Pic^0(Y_1) \times \Pic^0(Y_2)$.
In particular, the $p$-rank of $Y$ is:
\begin{equation}
\label{eqblrprank}
f(Y) = f(Y_1)+f(Y_2).
\end{equation}

The signature type of $(Y/S, \iota_0)$ is the sum of those for $(Y_i/S, \iota_{0,i})$.

For $1 \leq g_1 \leq g-1$, let $\Delta_{g_1}[\bar \calt_{\ell, \bar{\gamma}}]$ be the image of 
$\kappa_{g_1,g_2}$ in $\bar \calt_{\ell, \bar{\gamma}}$, where
$g_2=g-g_1$ and $(\gamma_1, \gamma_2)$ ranges over the appropriate admissible pairs of class vectors.

\subsubsection{Clutching maps (non-compact type)}

In this case, let $g=g_1+g_2 + (\ell-1)$ and $n=n_1+n_2$ and 
$\gamma = (\gamma_1(1), \ldots, \gamma_1(n_1), \gamma_2(1), \ldots, \gamma_2(n_2))$.
The other clutching maps are:
\[\lambda_{g_1,g_2}: \bar \calt_{\ell, \bar{\gamma}_1;1}\times \bar \calt_{\ell, \bar{\gamma}_2;1} \to  
\bar \calt_{\ell, \bar{\gamma}}.\]
To define $\lambda_{g_1,g_2}$, consider a $\ZZ/\ell \ZZ$-curve $(Y_i/S, \iota_{0,i})$ with a mark $P_i$, for $i=1,2$.
One can glue these curves together to form a curve $Y/S$
by identifying the orbits of $P_1$ and $P_2$ in $\ell$ ordinary 
double points; (Specifically, identify $\iota_{0,1}(g)(P_1)$ and $\iota_{0,2}(g)(P_2)$ for $g \in G$).
Then $Y/S$ admits a $G$-action $\iota_0$ and has inertia type $\bar{\gamma}$.

Since $Y_1$ and $Y_2$ intersect in more than one point, the curve $Y/S$ has non-compact type.
By \cite[Ex.\ 9.2.8]{blr}, $\Pic^0(Y)$ is an extension
\begin{equation*}
0 \to Z \to  \Pic^0(Y)  \to \Pic^0(Y_1) \times \Pic^0(Y_2) \to 0,
\end{equation*}
where $Z$ is an $(\ell-1)$-dimensional torus.  Thus $Y$ has genus $g$
and the $p$-rank of $Y$ is:
\begin{equation}
\label{E:deltaif}
f(Y) = f(Y_1)+f(Y_2)+(\ell-1).
\end{equation}

There is an action of $\ZZ/\ell \ZZ$ on $Z$ and each of the non-trivial eigenspaces has dimension $1$;
we define the signature type of $Z$ to be $(1, \ldots, 1)$. 
The signature type of $(Y/S, \iota)$ is the sum of those for $(Y_i/S, \iota_{0,i})$ and $Z$.

For $0 \leq g_1 \leq g-(\ell-1)$, let $\Xi_{g_1}[\bar \calt_{\ell, g}] \subset \bar \calt_{\ell, g}$ be the image of 
$\lambda_{g_1,g_2}$, where $g_2=g-g_1-(\ell-1)$ and
$(\gamma_1, \gamma_2)$ ranges over the appropriate pairs of class vectors.
Let $\Delta_0[\bar \calt_{\ell, g}]$ 
be the union of $\Xi_{g_1}[\bar \calt_{\ell, g}]$ for $0 \leq g_1 \leq g-(\ell-1)$.
Then $\Delta_0[\bar \calt_{\ell, g}]$ is the set of moduli points of
stable $\ZZ/\ell \ZZ$-curves of genus $g$ which are not of compact type. 

\subsection{Components and dimension of the boundary}

The boundary of $\bar \calt_{\ell, g}$ is $\delta \calt_{\ell, g} = \bar\calt_{\ell, g} - \calt_{\ell, g}$.  
If $g \geq 2$, then $\delta \calt_{\ell, g}$ is the union of
$\Delta_i = \Delta_{i}[\bar\calt_{\ell, g}]$ for $1 \leq i \leq g-1$ 
and $\Xi_i = \Xi_i[\bar \calt_{\ell, g}]$ for $0 \leq i \leq g-(\ell-1)$, some of which may be empty. 
Note that $\Delta_i$ and $\Delta_{g-i}$ (resp.\ $\Xi_i$ and $\Xi_{g-i-(\ell-1)}$) 
are the same substack of $\bar\calt_{\ell, g}$.

If ${\mathcal S}$ is a stack with a map ${\mathcal S} \to \bar\calt_{\ell, g}$, let
$\Delta_i[{\mathcal S}]={\mathcal S} \times_{\bar\calt_{\ell, g}} \Delta_i[\bar\calt_{\ell,g}]$. 
So, $\Delta_i[\til\calt_{\ell, g}] =
\til\calt_{\ell, g}\times_{\bar\calt_{\ell, g}} \Delta_i$.  
Similar notation is used for $\Xi_i$. 

\begin{lemma} \label{lem:divisorboundary}
Every irreducible component of $\del\bar\calt_{\ell, g}$ has dimension ${\rm dim}(\calt_{\ell, g})-1$.
\end{lemma}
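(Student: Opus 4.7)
The plan is to show that each of the boundary strata $\Delta_i$ (for $1 \le i \le g-1$) and $\Xi_i$ (for $0 \le i \le g-(\ell-1)$) has pure dimension $n-4$, where $n$ is the number of smooth ramification points so that $\dim(\calt_{\ell,g}) = n-3$ by Lemma \ref{Lmoduli}(5). Since $\partial \bar\calt_{\ell,g}$ is by construction the union of these strata, every irreducible component of the boundary will lie in one of them, and the lemma follows.

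For $\Delta_i$, I would use the labeled clutching map $\kappa_{g_1,g_2}$ of \eqref{Ekappa} with $g_1=i$, $g_2=g-i$. For each admissible pair of class vectors $(\gamma_1,\gamma_2)$, the source $\til\calt_{\ell,\gamma_1} \times \til\calt_{\ell,\gamma_2}$ has dimension $(n_1-3)+(n_2-3)$ by Lemma \ref{Lmoduli}(5); since $n=n_1+n_2-2$ in the compact-type gluing, this equals $n-4$. The map is a closed immersion into $\til\calt_{\ell,\gamma}$ composed with the finite étale Galois forgetful map $\til\calt_{\ell,\gamma} \to \bar\calt_{\ell,\bar\gamma}$ from Lemma \ref{Lmoduli}(3), and is therefore finite onto its image. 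Hence $\Delta_i$ is equidimensional of dimension $n-4$.

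For $\Xi_i$, I would use $\lambda_{g_1,g_2}$ with $g_2=g-g_1-(\ell-1)$. The source $\bar\calt_{\ell,\bar\gamma_1;1}\times \bar\calt_{\ell,\bar\gamma_2;1}$ carries one additional degree-one smooth mark per factor, contributing one dimension beyond $n_j-3$ on each side, for a total of $(n_1-2)+(n_2-2)=n_1+n_2-4$. Since $n=n_1+n_2$ in the non-compact-type gluing, this is again $n-4$. It then remains to verify that $\lambda_{g_1,g_2}$ has finite fibers onto its image: given a clutched non-compact-type curve $Y$, its normalization recovers the two components $(Y_1,Y_2)$ up to swap, and the marked points are recovered up to the $G$-orbit of identified nodes. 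Since the source is proper, this quasi-finite morphism is finite, so the image $\Xi_i$ is equidimensional of dimension $n-4$.

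The step I expect to be the main obstacle is the finiteness check for $\lambda_{g_1,g_2}$. Unlike $\kappa$, which is essentially a closed immersion on labeled moduli by \cite[Corollary 3.9]{knudsen2}, the map $\lambda$ identifies an entire $G$-orbit of $\ell$ nodes and may permute the two components if the inertia data on the two sides match; one must argue carefully that only finitely many choices of marked points on $(Y_1,Y_2)$ produce the same clutched $Y$, and then invoke properness of the source to upgrade quasi-finiteness to finiteness, which preserves the dimension count.
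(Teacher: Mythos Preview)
Your proposal is correct and follows essentially the same route as the paper's proof: split into the compact-type components $\Delta_i$ and the non-compact-type components $\Xi_i$, compute the dimension of the source of the relevant clutching map in each case, and conclude that the image has dimension $n-4$. The only difference is emphasis. The paper phrases the computation in terms of the generic point of an irreducible component $W$ of the boundary (describing the two-component curve it represents and adding up the dimensions of the parameter spaces for the pieces), whereas you phrase it as a dimension count on the source of the clutching map together with finiteness onto the image. Your worry about verifying finiteness of $\lambda_{g_1,g_2}$ is already handled by the paper's setup: in the introduction to the clutching maps it is stated that each of $\kappa$ and $\lambda$ is the restriction of a finite, unramified morphism between moduli spaces of labeled curves (citing \cite{knudsen2}), so no separate quasi-finiteness argument is needed.
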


\begin{proof}
Let $W$ be an irreducible component of $\delta \calt_{\ell, g}$.
There is an inertia type $\bar \gamma$ such that $W$ is either a component of 
(i) $\Delta_i[\calt_{\ell, \bar \gamma}]$ for some $0 \leq i \leq g-1$
or (ii) $\Xi_i[\calt_{\ell, \bar \gamma}]$ for some $0 \leq i \leq g-(\ell-1)$.
By Lemma~\ref{Lmoduli}(5), it suffices to show that ${\rm dim}(W)=n-4$.

Case (i): 
In this case, a generic point of $W$ is the moduli point of a singular curve $Y$ with two irreducible components $Y_1$ and $Y_2$ intersecting in one ordinary double point $y$.
Let $\bar \gamma_i$ be the inertia type of the restriction of the $\ZZ/\ell \ZZ$-action to $Y_i$
and let $n_i=n(\bar \gamma_i)$.  
Then $n_1+n_2 -2 = n$ since $y$ is a ramification point for the two restrictions. So
\[{\rm dim}(W)=\dim(\calt_{\ell, \bar \gamma_1}) + \dim(\calt_{\ell, \bar \gamma_2})=(n_1-3)+(n_2-3)=n_1+n_2-6=n-4.\] 

Case (ii):
In this case, a generic point of $W$ is the moduli point of a singular curve $Y$ with two irreducible components $Y_1$ and $Y_2$, of genera $i$ and $g-i-(\ell-1)$ intersecting at one unramified $\ZZ/\ell \ZZ$-orbit.  
Let $\bar \gamma_i$ be the inertia type of the restriction of 
the $\ZZ/\ell \ZZ$-action to $Y_i$ and let $n_i=n(\bar \gamma_i)$.  
Then $n_1+n_2=n$.
There is a 1-dimensional choice of an unramified orbit on each of $Y_1$ and $Y_2$.
So 
\[{\rm dim}(W)={\rm dim}(\calt_{\ell, \bar \gamma_1}) + 1 + {\rm  dim}(\calt_{\ell, \bar \gamma_2}) + 1= (n_1-3)+(n_2-3)+2 =n-4.\]

\end{proof}

The next result will be used to find an upper bound on the dimension of the $p$-rank strata.

\begin{proposition}\label{prop:upperbound} 
If ${\mathcal S} \subset \bar \calt_{\ell,g}$ has the property that ${\mathcal S}$ intersects $\Delta_i$, then
\[{\rm dim}({\mathcal S}) \leq {\rm dim}(\Delta_i[{\mathcal S}]) +1.\]
\end{proposition}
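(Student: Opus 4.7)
The plan is to invoke Krull's Hauptidealsatz. The hypothesis combined with smoothness of the ambient stack forces $\Delta_i[\mathcal{S}]$ to be cut out, locally, by a single equation inside $\mathcal{S}$, so its codimension in $\mathcal{S}$ is at most one. Concretely, I would use Lemma~\ref{Lmoduli}(1) to get that $\bar\calt_{\ell,g}$ is a smooth Deligne--Mumford stack over $k$, and Lemma~\ref{lem:divisorboundary} to conclude that every irreducible component $D$ of the boundary substack $\Delta_i$ has codimension one in $\bar\calt_{\ell,g}$. Since regular local rings are unique factorization domains, every height-one prime ideal is principal; so, after passing to an \'etale chart of $\bar\calt_{\ell,g}$ around any geometric point of $D$, the component $D$ is locally cut out by a single nonzero regular function.

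Next, I would pick a geometric point $P \in \mathcal{S} \cap D$ for some irreducible component $D$ of $\Delta_i$ (such a $P$ exists by the hypothesis that $\mathcal{S}$ meets $\Delta_i$), pass to an \'etale neighborhood of $P$ in $\bar\calt_{\ell,g}$, and let $f$ be a local defining equation for $D$. By Krull's principal ideal theorem, every irreducible component of the vanishing locus $V(f|_{\mathcal{S}}) \subseteq \mathcal{S}$ through $P$ has codimension at most one in $\mathcal{S}$ (codimension zero in the degenerate case $\mathcal{S} \subseteq D$, and exactly one otherwise). Since $V(f|_{\mathcal{S}}) \subseteq \mathcal{S} \cap D \subseteq \Delta_i[\mathcal{S}]$, this produces an irreducible subset of $\Delta_i[\mathcal{S}]$ of dimension at least $\dim(\mathcal{S}) - 1$, and rearranging gives the desired inequality.

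I expect no serious obstacle; the only mild subtlety is executing the ``locally one equation'' step on a smooth Deligne--Mumford stack rather than on a scheme, which is handled routinely by passing to an \'etale atlas and using that the local rings of the atlas are regular. Implicitly the statement is being applied to an irreducible $\mathcal{S}$, or equivalently to each irreducible component of $\mathcal{S}$ individually, since otherwise a low-dimensional component of $\mathcal{S}$ could hit $\Delta_i$ while a top-dimensional component avoids it.
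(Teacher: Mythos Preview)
Your proof is correct and follows essentially the same route as the paper: both exploit the smoothness of $\bar\calt_{\ell,g}$ (Lemma~\ref{Lmoduli}(1)) together with $\mathrm{codim}(\Delta_i)=1$ (Lemma~\ref{lem:divisorboundary}) to conclude that intersecting with $\Delta_i$ drops dimension by at most one in $\mathcal{S}$. The paper packages this as the general codimension inequality for intersections of closed substacks in a smooth proper stack (citing \cite[p.\ 614]{V:stack}), whereas you unwind that same inequality directly via local principality of height-one primes in a regular local ring and Krull's Hauptidealsatz; your caveat that $\mathcal{S}$ should be taken irreducible is apt and is implicit in the paper's applications.
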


\begin{proof}
A smooth proper stack has the same intersection-theoretic properties as a 
smooth proper scheme \cite[p.\ 614]{V:stack}.
In particular, if two closed substacks of $\bar \calt_g$ intersect 
then the codimension of their intersection is at most the sum of their codimensions.
Now $\Delta_i[\bar \calt_{\ell,g}]$ is a closed substack of $\bar \calt_{\ell,g}$.
It suffices to consider the case that ${\mathcal S}$ is closed.
Thus 
\[{\rm codim}(\Delta_i[{\mathcal S}], \bar \calt_{\ell,g}) \leq 
{\rm codim}(\Delta_i, \bar \calt_{\ell,g}) + {\rm codim}({\mathcal S}, \bar \calt_{\ell,g}).\]
The result follows from Lemma~\ref{lem:divisorboundary} since ${\rm codim}(\Delta_i, \bar \calt_{\ell, g})=1$.
\end{proof}

\subsection{The $p$-rank stratification} 

If $A$ is a
semi-abelian scheme over a Deligne-Mumford stack ${\mathcal S}$, then there is a
stratification ${\mathcal S} = \cup {\mathcal S}^{f}$ by locally closed reduced substacks such that $s \in {\mathcal S}^f(k)$ if and only if $f(A_s) =f$, \cite[Theorem 2.3.1]{katzsf}, see also \cite[Lemma 2.1]{AP:monoprank}. For example, ${\calt}_{\ell, g}^f$ is the locally closed reduced substack of ${\calt}_{\ell,g}$
whose points represent smooth $\ZZ/\ell \ZZ$-curves of genus $g$ with $p$-rank $f$.

We use the following notation for the $p$-rank $f$ stratum of the boundary, $\Delta_i[\bar\calt_{\ell, g}]^f:=(\Delta_i[\bar\calt_{\ell, g}])^f$. 
These strata are easy to describe using the clutching maps.  
First, if $1 \leq i \leq g-1$, then \eqref{eqblrprank} implies that $\Delta_i[\bar \calt_{\ell, g}]^f$ is the union of the images of $\til \calt_{\ell, i}^{f_1} \times \til \calt_{\ell, g-i}^{f_2}$ 
under $\kappa_{i, g-i}$ as $(f_1,f_2)$ ranges over all pairs (satisfying Lemma~\ref{lem:edivf}) such that
\begin{equation*}
0 \leq f_1 \leq i,\ 0 \leq f_2 \leq g-i \text{ and } f_1+f_2=f.
\end{equation*}
Second, if $f \geq 2$ and $0 \leq i \leq g-(\ell-1)$, then \eqref{E:deltaif} implies that $\Xi_i[\bar \calt_{\ell, g}]^f$ is the union of the images of 
$\bar \calt_{\ell, i;1}^{f_1} \times \bar \calt_{\ell, g-(\ell-1)-i;1}^{f_2}$ under $\lambda_{i, g-(\ell-1)-i}$ 
as $(f_1,f_2)$ ranges over all pairs (satisfying Lemma~\ref{lem:edivf}) such that
\begin{equation*}
\label{Xif1f2conditions}
0 \leq f_1 \leq i,\ 0 \leq f_2 \leq g-(\ell-1)-i \text{ and } f_1+f_2=f-(\ell-1).
\end{equation*}

\subsection{Shimura varieties} \label{Sshimura}

We briefly review some notation about Shimura varieties that we need in Sections~\ref{Sbasecases} and \ref{Sapplication2}.  We refer to \cite[Section 3.3]{LMPT2} for a longer explanation.

\begin{notation} \label{Ntorelli}
Let $\ell$ be an odd prime. 
Consider an inertia type $\bar \gamma$ for $\ell$.
It determines the number of branch points $n=n(\bar \gamma)$ and 
the genus $g=g(\bar \gamma)$ as in \eqref{Egenus} 
for a $\ZZ/\ell\ZZ$-cover $Y \to {\mathbb P}^1$ with inertia type $\bar \gamma$.
Furthermore, it determines the signature type of the cover as in Lemma~\ref{Lsignature}. 
\end{notation}

Recall that $\bar \calt_{\ell, {\bar \gamma}}$ is the moduli space of $\ZZ/\ell\ZZ$-covers $Y \to {\mathbb P}^1$
with inertia type $\bar{\gamma}$.
By Lemma~\ref{Lmoduli}, $\bar \calt_{\ell, {\bar \gamma}}$ is irreducible and has 
dimension $n(\bar{\gamma})-3$. 

\begin{notation} \label{Nshimura}
Let ${\mathcal A}_g$ be the moduli space of principally polarized abelian varieties of dimension $g$.
Consider the image of $\bar \calt_{\ell, {\bar \gamma}}$ in ${\mathcal A}_g$.
Let $Z_{\bar{\gamma}}=Z(\ell, n, \bar{\gamma})$ be the closure of this image; its points represent Jacobians of 
curves (smooth or of compact type) that admit a $\ZZ/\ell\ZZ$-cover of ${\mathbb P}^1$ 
with inertia type $\bar{\gamma}$.

Attached to the data of $\ell$ and the signature type, there is a PEL-type Shimura variety ${\rm Sh}$. 
Let $\Sigma_{\bar{\gamma}}=\Sigma(\ell, n, \bar{\gamma})$ be the irreducible component of ${\rm Sh}$ 
that contains $Z_{\bar{\gamma}}$.
\end{notation}


\section{Intersection of the $p$-rank $0$ stratum with the boundary}

In this section, we study the geometry of the $p$-rank stratification on the moduli space of cyclic degree $\ell$ 
covers of the projective line.

Recall that $p$ is a prime such that $p \not = \ell$ and
$e$ is the order of $p$ modulo $\ell$.  
The formula for the upper bound $B(\bar \gamma)$ for the $p$-rank of a cover with inertia type $\bar \gamma$ is in \eqref{Ebound}. 
Let $f$ be a multiple of $e$ such that $0 \leq f < B(\bar \gamma)$.
Define $\epsilon = 1$ if $p \equiv 1 \bmod \ell$ and $\epsilon=0$ otherwise.

We first give a lower bound on the dimension of the $p$-rank strata.

\begin{proposition} \label{Tpurity} 
Suppose the $p$-rank $f$ stratum $\bar \calt^f_{\ell,\bar \gamma}$ is non-empty and let 
${\mathcal S}$ be an irreducible component of it.
Then 
\begin{equation} \label{Elowerbound}
{\rm dim}({\mathcal S}) \geq {\rm dim}(\bar \calt_{\ell, \bar \gamma}) - (B(\bar \gamma)-f)/e + \epsilon.
\end{equation}
\end{proposition}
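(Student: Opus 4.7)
The plan is to exhibit $\bar\calt^f_{\ell,\bar\gamma}$ locally as cut out by rank conditions on the Cartier operator acting on the relative Hodge bundle, and to count the codimension of these conditions.

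First, on a smooth atlas $T$ of $\bar\calt_{\ell,\bar\gamma}$ carrying a universal $G$-curve $\pi:\cU\to T$, I would decompose the relative Hodge bundle $\pi_*\Omega^1_{\cU/T}$ under the $G$-action as $\bigoplus_{i=1}^{\ell-1}\LL_i$ with $\LL_i$ locally free of rank $s_i$. The relative Cartier operator $\C$ is a $p^{-1}$-semilinear sheaf map satisfying $\C(\LL_i)\subset\LL_{\sigma(i)}$, where $\sigma:i\mapsto p^{-1}i\bmod\ell$. Grouping the $\LL_i$ into $\sigma$-orbits $o\in O$ (each of size $e$), and for each $o$ fixing $i_o\in o$ realizing $s_{i_o}=\min\{s_i:\LL_i\in o\}$, the iterate $\C^e$ restricts to a $p^{-e}$-semilinear endomorphism of $\LL_{i_o}$ whose stable rank $r_o\le s_{i_o}$ contributes $e\,r_o$ to the fiberwise $p$-rank; thus $f=\sum_o e\,r_o$ and $B(\bar\gamma)=\sum_o e\,s_{i_o}$.

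Next, I would observe that on $T$ the locus $\{r_o\le s_{i_o}-k_o\}$ is defined locally by the vanishing of the $(s_{i_o}-k_o+1)$-minors of a matrix representing a sufficiently high iterate of $\C^e|_{\LL_{i_o}}$. By the standard determinantal codimension bound, adapted to semilinear operators as in the proof of \cite[Theorem~2.3]{FV}, this locus has codimension at most $k_o$ in $T$. Since $\bar\calt^f_{\ell,\bar\gamma}$ is the union over admissible tuples $(k_o)$ with $\sum_o e\,k_o=B(\bar\gamma)-f$ of the intersection of these rank-drop loci, one obtains ${\rm codim}({\mathcal S})\le\sum_o k_o=(B(\bar\gamma)-f)/e$, which yields \eqref{Elowerbound} in the case $\epsilon=0$.

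Finally, when $p\equiv 1\bmod\ell$ we have $e=1$, every orbit is a singleton, and each $\LL_i$ is individually preserved by $\C$. In this case I would exploit the compatibility of the Cartier operator with the self-duality of $H^1_{\rm dR}$ induced by the principal polarization of the Jacobian to exhibit one relation among the defining rank-drop conditions, showing that at least one of the $\sum_o k_o$ codimension-one conditions is implied by the others. This redundancy saves one codimension and produces the extra $+\epsilon = +1$ term.

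The main obstacle I expect is this last step: while the basic determinantal codimension count of the middle paragraph is essentially the equivariant analogue of \cite[Theorem~2.3]{FV}, identifying precisely one redundant condition (rather than none, or more than one) when $p\equiv 1\bmod\ell$ requires a careful analysis of how Serre duality and the $G$-action interact with the Hodge-Frobenius/Cartier structure. Making the determinantal codimension argument fully rigorous on the smooth Deligne-Mumford stack $\bar\calt_{\ell,\bar\gamma}$ (rather than on a scheme) is also a technical but essentially standard point, handled by passing to the atlas $T$ and using that intersection-theoretic codimension bounds transfer between a smooth stack and its smooth covers (see \cite[p.~614]{V:stack}, already used in the proof of Proposition~\ref{prop:upperbound}).
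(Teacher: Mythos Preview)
Your approach is genuinely different from the paper's, and considerably more elaborate. The paper's proof is essentially two lines: it simply counts the possible $p$-rank values strictly above $f$ and invokes Oort's purity result \cite[Lemma~1.6]{Oort} that each $p$-rank jump occurs in codimension at most one. The possible values are multiples of $e$ (Lemma~\ref{lem:edivf}(1)) bounded above by $B(\bar\gamma)$, so there are at most $(B(\bar\gamma)-f)/e$ of them; when $p\equiv 1\bmod\ell$ the value $g-1$ is also excluded by Lemma~\ref{lem:edivf}(2), which immediately accounts for the extra $+\epsilon$. No determinantal analysis, no orbit-by-orbit decomposition, no duality argument is needed.

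Your $\epsilon=0$ case is morally the same statement unpacked orbit-by-orbit, but note that the ``standard determinantal codimension bound'' does \emph{not} give codimension $\le k_o$ for a rank drop of $k_o$: for linear maps the codimension of the rank-$\le(n-k)$ locus in $n\times n$ matrices is $k^2$, not $k$. What makes the codimension-one-per-drop statement true is precisely the $p^{-1}$-semilinearity, and proving this is exactly the content of Oort's purity lemma. So rather than ``adapting'' a determinantal bound, you are reproving Oort's result in an equivariant setting; it would be cleaner to cite it directly, as the paper does. (Also, \cite[Theorem~2.3]{FV} is proved by degeneration to the boundary, not by a determinantal argument, so that reference does not support the step you need.)

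Your $\epsilon=1$ case has a real gap, as you yourself flag. The Serre-duality/polarization route you sketch is both vague and unnecessary: the paper gets the extra $+1$ for free from Lemma~\ref{lem:edivf}(2), which was already established and says exactly that $f=g-1$ cannot occur when $p\equiv 1\bmod\ell$. That removes one value from the list of possible $p$-ranks above $f$, hence saves one codimension in the purity count. You should replace your third paragraph with this observation.
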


\begin{proof}
The $p$-ranks which occur on $\bar \calt_{\ell,\bar \gamma}$ are multiples of $e$ by Lemma~\ref{lem:edivf} and 
are at most $B(\bar \gamma)$ by Theorem~\ref{LboundB}.  Also, if $p \equiv 1 \bmod \ell$, then $e=1$ and 
$f \not = g-1 = B(\bar \gamma) -1$ by Lemma~\ref{lem:edivf}.
So the number of integers $f'$ such that $f < f' \leq B(\bar \gamma)$ which can 
occur as the $p$-ranks for points of $\bar \calt_{\ell,\bar \gamma}$ is at most $(B(\bar \gamma)-f)/e + \epsilon$.
The statement is then an immediate application of the purity result of Oort \cite[Lemma 1.6]{Oort} which states that if the $p$-rank changes, then it does so on a subspace of codimension $1$.
\end{proof}

\begin{remark}
For $\ell \geq 5$, the lower bound on the right hand side of \eqref{Elowerbound} is positive only
when $f$ is large relative to $g$.
For example, if $\ell =5$ and $p \equiv 1 \bmod 5$, then it is $-g/2 + f$.
\end{remark}

In the next result, assuming that the $p$-rank $0$ stratum $\bar \calt_{\ell, {\bar \gamma}}^0$ is non-empty, 
we show that it
intersects the boundary deeply 
(in the sense that the intersection contains points corresponding to reducible curves with many components). 

\begin{theorem}\label{Cbreakupf=0}
Suppose ${\mathcal S}$ is an irreducible component of the $p$-rank $0$ stratum
$\bar \calt_{\ell, {\bar \gamma}}^0$ of $\bar \calt_{\ell, {\bar \gamma}}$.
Let $\sigma = {\rm dim}({\mathcal S})$.
Then there exists $\eta \in {\mathcal S}$ such that the curve $Y_\eta$ of compact type represented by $\eta$ is reducible,
with at least 
$\sigma + 1$ components,
such that the $\ZZ/\ell \ZZ$-action stabilizes and acts non-trivially on each component.
\end{theorem}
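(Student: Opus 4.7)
The plan is to induct on $\sigma = \dim({\mathcal S})$.

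\emph{Base case} $\sigma = 0$. Any $\eta \in {\mathcal S}$ represents a curve $Y_\eta$ of compact type: by \eqref{E:deltaif}, every non-compact-type stable $\ZZ/\ell\ZZ$-curve has $p$-rank at least $\ell - 1 \ge 2 > 0$. Such $Y_\eta$ trivially satisfies the conclusion for $\sigma + 1 = 1$ component.

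\emph{Inductive step} $\sigma \ge 1$. First I will establish that ${\mathcal S}$ meets some compact-type divisor $\Delta_i[\bar{\calt}_{\ell, \bar{\gamma}}]$ with $1 \le i \le g-1$. Intersection with any $\Xi_i$ is ruled out by \eqref{E:deltaif}, so it suffices to show ${\mathcal S}$ meets $\delta\calt_{\ell,\bar{\gamma}}$ at all. Granted this, Proposition \ref{prop:upperbound} gives $\dim(\Delta_i[{\mathcal S}]) \ge \sigma - 1$. Choose an irreducible component ${\mathcal T}$ of $\Delta_i[{\mathcal S}]$ with $\dim({\mathcal T}) \ge \sigma - 1$. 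Passing to the finite \'etale Galois cover $\til{\calt}_{\ell,\gamma} \to \bar{\calt}_{\ell,\bar{\gamma}}$ of Lemma~\ref{Lmoduli}(3) and invoking the clutching presentation \eqref{Ekappa}, the component ${\mathcal T}$ lifts to an irreducible component of $\til{\calt}^{f_1}_{\ell,\gamma_1} \times \til{\calt}^{f_2}_{\ell,\gamma_2}$ for some admissible pair of class vectors $(\gamma_1, \gamma_2)$ with $g(\gamma_1) + g(\gamma_2) = g$; additivity \eqref{eqblrprank} forces $f_1 = f_2 = 0$. Write this product component as ${\mathcal S}_1 \times {\mathcal S}_2$ with $\sigma_j = \dim({\mathcal S}_j)$, so $\sigma_1 + \sigma_2 \ge \sigma - 1$ and each $\sigma_j < \sigma$.

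Now the inductive hypothesis, applied to each ${\mathcal S}_j$ viewed as a component of $\bar{\calt}^0_{\ell,\bar{\gamma}_j}$, produces $\eta_j \in {\mathcal S}_j$ whose curve $Y_{\eta_j}$ is of compact type with at least $\sigma_j + 1$ components, each $\ZZ/\ell\ZZ$-stable and acted on non-trivially. Applying $\kappa_{g_1, g_2}$ to $(\eta_1, \eta_2)$ produces $\eta \in {\mathcal S}$ whose curve $Y_\eta$ is obtained by identifying the two labelled ramification points in a single $\ZZ/\ell\ZZ$-fixed node. The resulting $Y_\eta$ has at least $(\sigma_1 + 1) + (\sigma_2 + 1) \ge \sigma + 1$ components, its dual graph is the join of two trees along a new edge and so remains a tree, so $Y_\eta$ is of compact type, and the $\ZZ/\ell\ZZ$-action on each component inherits both the stabilization and the non-triviality. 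This closes the induction.

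The main obstacle is the boundary-intersection step: showing that a positive-dimensional irreducible component ${\mathcal S}$ of $\bar{\calt}_{\ell,\bar{\gamma}}^0$ cannot be entirely contained in the smooth locus $\calt_{\ell,\bar{\gamma}}$. A naive dimension argument via Diaz's bound on complete subvarieties of $\calm_g$ does not suffice, since the lower bound on $\dim({\mathcal S})$ coming from Proposition \ref{Tpurity} need not exceed $g - 2$ for general inertia types. I expect the proof will instead mirror the degeneration arguments of \cite[Lemma~2.5]{FV} and \cite[Corollary~3.6]{AP:monoprank}, producing an explicit degeneration within ${\mathcal S}$ whose special fiber is a chain of supersingular elliptic pieces carrying compatible $\ZZ/\ell\ZZ$-actions, and hence lies in some $\Delta_i$. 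All remaining pieces of the argument are bookkeeping with the clutching maps \eqref{Ekappa}, the $p$-rank additivity \eqref{eqblrprank}, and Proposition \ref{prop:upperbound}.
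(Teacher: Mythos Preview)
Your overall architecture matches the paper's: reduce to a clutching picture on some $\Delta_i$, invoke Proposition~\ref{prop:upperbound} to control dimensions, and recurse on the two factors. There are, however, two genuine gaps.

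\textbf{The boundary-intersection step.} You flag this as the main obstacle and speculate that one must imitate the explicit degeneration arguments of \cite{FV} or \cite{AP:monoprank}. The paper's argument is much shorter: the open locus $\calt_{\ell,\bar\gamma}$ is \emph{affine}. Since the $p$-rank~$0$ stratum is closed in the proper stack $\bar\calt_{\ell,\bar\gamma}$, the component ${\mathcal S}$ is itself proper; a proper irreducible scheme of positive dimension cannot lie inside an affine scheme, so ${\mathcal S}$ must meet $\delta\calt_{\ell,\bar\gamma}$. That is the entire argument for this step.

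\textbf{The induction variable.} You induct on $\sigma=\dim({\mathcal S})$ and assert, without justification, that ``each $\sigma_j<\sigma$''. This can fail. Nothing you have proved rules out ${\mathcal S}\subset\Delta_i$; indeed Remark~\ref{Rwhynot}(1) notes that, unlike in the hyperelliptic case, irreducible components of $\bar\calt^f_{(r,s)}$ need not have generic point representing a smooth curve. In that situation $\Delta_i[{\mathcal S}]={\mathcal S}$, the lifted product ${\mathcal S}_1\times{\mathcal S}_2$ has $\sigma_1+\sigma_2=\sigma$, and one factor may well have $\sigma_1=\sigma$ (with $\sigma_2=0$), so your inductive hypothesis does not apply to ${\mathcal S}_1$. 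The paper sidesteps this by inducting on the genus $g$ (equivalently on $n$): since each $g(\bar\gamma_i)<g$ whenever a nontrivial clutching occurs, the inductive hypothesis applies to both factors regardless of their dimensions. Switching your induction from $\sigma$ to $g$ repairs this step with no other change to your argument.
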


Before proving this theorem, we explain its significance.

\begin{remark} \label{Rsignificance}
\begin{enumerate}
\item Recall that
$\sigma \geq {\rm dim}(\bar \calt_{\ell, {\bar \gamma} }) - B(\bar{\gamma})/e + \epsilon$ by Proposition~\ref{Tpurity}. 
So, for each irreducible component of the $p$-rank $0$ stratum, Theorem~\ref{Cbreakupf=0} guarantees the existence
of a point representing a curve that is reducible, with many components.  
The existence of a degenerate point 
of this type can be helpful for studying the $p$-rank $0$ strata.
We illustrate this with several applications in Section~\ref{Sapplication2}.

\item Theorem~\ref{Cbreakupf=0} is a generalization of \cite[Theorem 3.11(c)]{APhypmono}, which is the case $\ell=2$.
Suppose $\ell=2$ and $g \geq 2$, in which case there is a unique inertia type $\bar{\gamma}$ for hyperelliptic 
curves of genus $g$.
In this case, Theorem~\ref{Cbreakupf=0} applies to an irreducible component 
${\mathcal S}$ of the $p$-rank $0$ stratum of the locus of hyperelliptic curves of genus $g$. 
By \cite[Proposition 2]{GP}, $\sigma = {\rm dim}({\mathcal S}) = g-1$. 
So Theorem~\ref{Cbreakupf=0} shows that ${\mathcal S}$ contains a point representing a curve that has $g$ components (each of which has genus $1$ and is thus a supersingular elliptic curve);
this is the conclusion of \cite[Theorem 3.11(c)]{APhypmono}.

\item In contrast, when $\ell$ is odd, then usually $\sigma < g-1$.  
Thus Theorem~\ref{Cbreakupf=0} does not imply that 
$\bar \calt_{\ell, {\bar \gamma}}^0$ contains a point representing a reducible curve with $g$ components.  
This makes it harder to study the case when $\ell$ is odd.

\item It is not possible to prove Theorem~\ref{Cbreakupf=0} using results on the boundary of the moduli space of $n$-marked curves of genus $0$.  The reason is that the relationship between the $p$-rank and the location of the 
branch points is extremely complicated.  As an example of this, see the case that $\ell=3$ and $g=2$ studied in Lemma~\ref{Lcalculateg2}.  In other words, it is not clear how to maintain the $p$-rank $0$ condition 
when deforming the curve by moving the branch points.  
\end{enumerate}
\end{remark}

\begin{proof} (Proof of Theorem~\ref{Cbreakupf=0})
The proof is by induction on the number of branch points $n=n(\bar \gamma)$.  
This is equivalent to induction on the genus $g=g(\bar \gamma)$, because $g=(n-2)(\ell-1)/2$.
For the base case, when $n=3$ and $g=(\ell-1)/2$, the statement is vacuous since $\calt_{\ell, {\bar \gamma} }$ has dimension $0$.

Suppose that the statement is true for all inertia types $\bar \gamma'$ for which the genus $g'$ is less than $g$.
Let $\bar \gamma$ be an inertia type for which the genus is $g$ and let 
${\mathcal S}$ be an irreducible component of $\bar \calt_{\ell, {\bar \gamma}}^0$.
When $\sigma=0$, the statement is vacuous.  

Suppose $\sigma > 0$.  Since $\calt_{\ell, {\bar \gamma}}$ is affine,
${\mathcal S}$ intersects a boundary component of $\bar \calt_{\ell, {\bar \gamma}}$.  
The points of ${\mathcal S}$ represent curves whose $p$-rank is $0$, hence 
\eqref{E:deltaif} implies that ${\mathcal S}$ does not intersect $\Delta_{0}$.
Thus ${\mathcal S}$ intersects $\Delta_j$ for some $1 \leq j \leq g-1$.  
A point of $\Delta_j[{\mathcal S}]$ represents a curve having at least two components 
(which completes the proof when $\sigma=1$).

By Proposition~\ref{prop:upperbound}, ${\rm dim}(\Delta_j[{\mathcal S}]) \geq \sigma -1$.
A point $\eta_0$ of $\Delta_j[{\mathcal S}]$ is in the image of a clutching morphism.
Specifically, there is an admissible pair of inertia types ${\bar \gamma} _1, {\bar \gamma} _2$, 
and points $\xi_i \in \tilde{\calt}_{\ell, {\bar \gamma} _i}$, for $i=1,2$, such that 
$\eta_0=\kappa_{j, g-j}(\xi_1, \xi_2)$.  
Since ${\mathcal S}$ is an irreducible component of $\bar \calt_{\ell, {\bar \gamma}}^0$, there
is an irreducible component $\Gamma_i$ of $\tilde{\calt}_{\ell, {\bar \gamma}_i}$, for $i=1,2$, such that 
$\kappa_{j, g-j}(\Gamma_1, \Gamma_2) \subset \Delta_j[{\mathcal S}]$.

Note that $g({\bar \gamma} _i) <g$.
Let $\sigma_i={\rm dim}(\Gamma_i)$.  Then $\sigma_1+\sigma_2 \geq \sigma-1$.  
By the inductive hypothesis, for $i=1,2$, there exists $\eta_i \in \Gamma_i$ such that the curve $Y_{\eta_i}$ of compact type represented by $\eta_i$ has at least $\sigma_i+1$ components.
Then $\kappa_{j, g-j}(\eta_1, \eta_2) \in \Delta_j[{\mathcal S}]$ has at least $(\sigma_1+1)+(\sigma_2+1) \geq \sigma+1$ components. 
\end{proof}

\section{Trielliptic covers} \label{Strielliptic}

In this section, we specialize to the case $\ell = 3$.
Suppose $p \not = 3$ is prime.
We study the $p$-ranks of trielliptic curves, which are $\ZZ/3 \ZZ$-covers of ${\mathbb P}^1$.
Suppose $g \geq 2$ and $(r,s)$ is a signature type for $g$.

In Proposition~\ref{Cf=B-2}, for all primes $p \geq 5$,  
we prove that there exists 
a trielliptic curve $Y$ defined over $\bar{\FF}_p$ of genus $g$ and signature $(r,s)$
whose $p$-rank is smaller than the upper bound $B(r,s)$. 

In Theorem~\ref{Trsexistence}, when $p \equiv 2 \bmod 3$ is odd, 
we prove that every integer $f$ satisfying the necessary conditions from Lemma~\ref{lem:edivf}
occurs as the $p$-rank of a trielliptic curve $Y$ defined over $\bar{\FF}_p$ of genus $g$ and
signature $(r,s)$;
in addition, we prove that there is an irreducible component of $\calt_{g, (r,s)}^f$ whose dimension 
equals the lower bound from Proposition~\ref{Tpurity}.

\subsection{Notation for trielliptic covers}

Suppose $(Y/S, \iota_0)$ is a smooth trielliptic curve.  The $\ZZ/3 \ZZ$-cover $\psi:Y \to \Proj^1$ 
has an equation of the form:
\begin{equation} \label{Etrielliptic}
y^3=\prod_{i=1}^{d_1} (x-\alpha_i) \prod_{i=1}^{d_2} (x-\beta_i)^2 .
\end{equation}
Without loss of generality, we assume that $\psi$ is not branched at $\infty$.
The number of branch points of $\psi$ is $n=d_1+d_2$ and the genus of $Y$ is $g=d_1+d_2-2$.
The {\em inertia type} of $\psi$ is ${\bar \gamma} =(\underbrace{1,\ldots,1}_{d_1},\underbrace{2,\ldots,2}_{d_2})$.

\begin{lemma} \label{Lgamma} \cite[Lemma 2.7]{AP:tri}
The set of inertia types ${\bar \gamma} $ for a trielliptic curve $(Y/S, \iota_0)$ of genus $g$
is in bijection with 
$\{(d_1,d_2) \mid d_1,d_2 \in \ZZ^{\geq 0}, \ d_1+d_2=g+2, \ d_1+2d_2 \equiv 0 \bmod 3\}$.
\end{lemma}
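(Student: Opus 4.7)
The plan is to identify inertia types with pairs $(d_1,d_2)$ in a direct combinatorial way, and then verify that the two numerical constraints in the lemma correspond exactly to (i) Riemann--Hurwitz and (ii) Riemann's existence theorem, as stated in the paper.

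First I would note that since $\ell = 3$, we have $G^* = (\ZZ/3\ZZ)^* = \{1,2\}$. The inertia type $\bar{\gamma}$ is, by definition, a multiset of elements of $G^*$, so it is completely determined by the pair $(d_1, d_2) := (\bar{\gamma}(1), \bar{\gamma}(2))$ of nonnegative integers. This gives a bijection between the set of inertia types for $\ZZ/3\ZZ$-covers (of any genus) and $\ZZ^{\geq 0} \times \ZZ^{\geq 0}$; it also recovers the normal form in \eqref{Etrielliptic}, since after a permutation we may write $d_1$ ramification points with inertia generator $1$ followed by $d_2$ with inertia generator $2$.

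Second, I would apply the Riemann--Hurwitz formula \eqref{Egenus} with $\ell = 3$. The total number of branch points is $n = d_1 + d_2$, and \eqref{Egenus} gives $g = (n-2)(\ell-1)/2 = n-2$. Rearranging yields the first constraint $d_1 + d_2 = g+2$. Hence an inertia type yields a trielliptic curve of genus $g$ exactly when $d_1+d_2 = g+2$.

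Third, I would invoke the existence criterion from Riemann's existence theorem, quoted in the paper right after \eqref{Egenus}: a $\ZZ/3\ZZ$-cover of $\Proj^1$ with the prescribed inertia type exists if and only if $\sum_{a \in R_{\smooth,s}} \gamma_a \equiv 0 \bmod 3$. Computing this sum for our inertia type gives $1 \cdot d_1 + 2 \cdot d_2 \equiv 0 \bmod 3$, which is precisely the second constraint in the lemma. Combining the three observations, the inertia types of trielliptic curves of genus $g$ are in bijection with pairs $(d_1, d_2)$ satisfying the stated conditions.

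There is no real obstacle here: each of the three ingredients is already stated (or immediate from) earlier material in the excerpt, so the proof is essentially bookkeeping. The only mild point to check is that a pair $(d_1,d_2)$ arising from an actual cover is uniquely recovered from the multiset $\bar{\gamma}$, which is automatic because $G^*$ has only two elements.
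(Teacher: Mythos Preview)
Your argument is correct and is exactly the natural one: identify an inertia type for $\ell=3$ with the pair of multiplicities $(d_1,d_2)$, use \eqref{Egenus} to get $d_1+d_2=g+2$, and use the Riemann existence criterion to get $d_1+2d_2\equiv 0 \bmod 3$. The paper itself does not supply a proof of this lemma; it simply cites \cite[Lemma~2.7]{AP:tri}, so there is nothing further to compare.
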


There is a $\ZZ/3 \ZZ$-eigenspace decomposition $H^0(Y, \Omega^1_Y)= \LL_1 \oplus \LL_2$ where 
$\omega \in \LL_i$ if $\zeta_3 \circ \omega = \zeta_3^i \omega$.
The {\em signature type} of $(Y/S, \iota_0)$ is $(r,s)$ where $r=\dim(\LL_1)$ and $s=\dim(\LL_2)$. 

If $(Y/S, \iota_0)$ is a trielliptic curve then so is $(Y/S, \iota'_0)$  
where $\iota'_0(1)=\iota_0(2)$. 
Replacing $\iota_0$ with $\iota'_0$ exchanges the values of $d_1$ and $d_2$ and the values of $r$ and $s$.

\begin{definition} \label{Dtrisig}
A {\em trielliptic signature} for $g \in \NN$ is a pair $(r,s)$ of integers with 
$r+s=g$, and $0 \leq {\rm max}\{r,s\} \leq 2 {\rm min}\{r,s\}+1$.
\end{definition}

The next result follows from Lemma~\ref{Lsignature}

\begin{lemma} \label{L:bijection} 
There is a bijection between trielliptic signatures $(r,s)$ for $g$ 
and inertia types of $\ZZ/3 \ZZ$-Galois covers of $\Proj^1$ of genus $g$ 
given by the formulae \[d_1=2r-s+1, \ d_2=2s-r+1.\]
\end{lemma}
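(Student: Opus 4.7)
The plan is to specialize the eigenspace dimension formula \eqref{E:eigenspace} to the trielliptic case, invert the resulting $2 \times 2$ linear system, and then match the two sets of constraints.

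First, I would apply \eqref{E:eigenspace} to the inertia type consisting of $d_1$ copies of $1$ and $d_2$ copies of $2$. A short bookkeeping of the fractional parts of $i a_j / 3$ for $i \in \{1,2\}$ and $a_j \in \{1,2\}$ yields
\[
r \;=\; \dim \LL_1 \;=\; \frac{d_1 + 2 d_2 - 3}{3}, \qquad s \;=\; \dim \LL_2 \;=\; \frac{2 d_1 + d_2 - 3}{3}.
\]
These are integers precisely because of the congruence $d_1 + 2 d_2 \equiv 0 \bmod 3$ from Lemma~\ref{Lgamma}. Summing recovers the Riemann-Hurwitz identity $r + s = d_1 + d_2 - 2 = g$ from \eqref{Egenus}, and solving the resulting $2 \times 2$ system for $(d_1, d_2)$ in terms of $(r, s)$ produces exactly the formulae $d_1 = 2s - r + 1$ and $d_2 = 2r - s + 1$ asserted in the statement.

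Next, I would verify that these formulae define mutually inverse maps between the two indexing sets. Starting from a trielliptic signature $(r, s)$ as in Definition~\ref{Dtrisig}, direct computation shows $d_1 + d_2 = r + s + 2 = g + 2$ and $d_1 + 2 d_2 = 3r + 3 \equiv 0 \bmod 3$, matching the two numerical conditions in Lemma~\ref{Lgamma}. The nonnegativity inequalities $d_1 \geq 0$ and $d_2 \geq 0$ unfold to $r \leq 2s + 1$ and $s \leq 2r + 1$, which together are equivalent to $\max(r, s) \leq 2 \min(r, s) + 1$. Running the same equivalences in reverse, starting from a valid inertia type $(d_1, d_2)$, recovers a trielliptic signature.

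There is no substantive obstacle beyond this explicit calculation: the only subtle point is correctly tracking fractional parts in the application of \eqref{E:eigenspace}, after which everything reduces to elementary linear algebra. The content of the lemma is precisely the observation that the asymmetry inequality $\max(r, s) \leq 2 \min(r, s) + 1$ of Definition~\ref{Dtrisig} is the nonnegativity of the branch data $(d_1, d_2)$ under this linear change of variables.
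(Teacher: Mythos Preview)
Your argument is correct and complete. The paper itself does not supply a proof of this lemma; it simply cites \cite[Lemma~2.9]{AP:tri}. Your computation via \eqref{E:eigenspace} and the subsequent matching of constraints is exactly the natural verification, and nothing further is needed.
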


In other words, $r=(2d_1+d_2-3)/3$ and $s=(d_1+2d_2-3)/3$.

\begin{example} \label{Eg=1}
(Signature $(1,0)$, inertia type ${\bar \gamma} =(1,1,1)$).
There is a unique smooth elliptic curve which is trielliptic. 
It  has $p$-rank $0$ when $p \equiv 2 \bmod 3$ and $p$-rank $1$ when $p \equiv 1 \bmod 3$.
\end{example}

\begin{proof}
By \cite[Theorem 10.1]{silverman}, an elliptic curve with automorphism of order $3$ has $j$-invariant $0$.
The result follows from \cite[Exercise V.5.7, Example V.4.4]{silverman}.
\end{proof}

\subsection{Components of the moduli space and maximal $p$-rank}

Let $(r,s)$ be a trielliptic signature for $g$.  Let $\bar \gamma$ be the inertia type given by
$\bar \gamma(i) = d_i$ where $d_i$ are as in Lemma~\ref{L:bijection}.
Let $f$ be an integer $0 \leq f \leq g$ satisfying the conditions of Lemma~\ref{lem:edivf}, 
namely, $f$ is even if $p \equiv 2 \bmod 3$ and $f \not = g-1$ if  $p \equiv 1 \bmod 3$.

Consider the moduli space $\calt_{(r,s)}=\calt_{3, \bar{\gamma}}$ 
of smooth trielliptic curves with signature $(r,s)$
and inertia type $\bar \gamma$.
For $f$ as above,
let $\calt_{(r,s)}^f$ denote the $p$-rank $f$ stratum of $\calt_{(r,s)}$.
Similarly, define $\bar{\calt}^f_{(r,s)}$ by replacing the word smooth by stable.

The next result is a special case of Proposition~\ref{Tpurity}.

\begin{proposition} \label{Tpurity3} 
Suppose ${\mathcal S}$ is an irreducible component of $\bar \calt_{(r,s)}^f$.
If $p \equiv 2 \bmod 3$, then ${\rm dim}({\mathcal S}) \geq {\rm max}\{r,s\} -1 + f/2$.
If $p \equiv 1 \bmod 3$ and $f<g$, then ${\rm dim}({\mathcal S}) \geq f$.
\end{proposition}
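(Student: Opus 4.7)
The plan is simply to specialize Proposition \ref{Tpurity} to the trielliptic setting $\ell=3$ and unwind the various quantities that appear on the right-hand side of \eqref{Elowerbound} in the cases $p \equiv 1 \bmod 3$ and $p \equiv 2 \bmod 3$. In particular, there is no new geometric input required: the only work is in identifying $\dim(\bar\calt_{\ell,\bar\gamma})$, $B(\bar\gamma)$, $e$, and $\epsilon$ in terms of the trielliptic signature $(r,s)$.

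First I would observe that if $\bar\gamma$ is the inertia type corresponding to $(r,s)$ via Lemma~\ref{L:bijection}, then $n(\bar\gamma)=d_1+d_2=g+2$, so by Lemma~\ref{Lmoduli}(5),
\[
\dim(\bar\calt_{3,\bar\gamma}) \;=\; n(\bar\gamma)-3 \;=\; g-1.
\]
Next I would record the value of $B(\bar\gamma)$ directly from the definition given just before Example~\ref{Eg=1}: $B(\bar\gamma)=B(r,s)=g$ when $p\equiv 1\bmod 3$, and $B(\bar\gamma)=B(r,s)=2\min(r,s)$ when $p\equiv 2\bmod 3$. Finally I would note that $e$, the order of $p$ modulo $3$, equals $1$ in the first case and $2$ in the second, and similarly $\epsilon=1$ in the first case and $\epsilon=0$ in the second.

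With these quantities in hand, the proof is a one-line substitution into Proposition~\ref{Tpurity}. If $p\equiv 2\bmod 3$, then
\[
\dim({\mathcal S}) \;\geq\; (g-1) \;-\; \tfrac{2\min(r,s)-f}{2} \;+\; 0 \;=\; g-\min(r,s)-1+f/2 \;=\; \max(r,s)-1+f/2,
\]
using $r+s=g$ in the last step. If instead $p\equiv 1\bmod 3$ and $f<g$, then
\[
\dim({\mathcal S}) \;\geq\; (g-1) \;-\; (g-f) \;+\; 1 \;=\; f.
\]

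The only minor point to verify is that the hypothesis $f<g$ in the second case is indeed required so that $f\neq B(\bar\gamma)$ (otherwise the bound from Proposition~\ref{Tpurity} trivially says $\dim({\mathcal S})\geq \dim(\bar\calt_{3,\bar\gamma})$, which is automatic but the statement we are after is vacuous anyway when $f=g$). Since the argument is entirely bookkeeping, there is no real obstacle; the proof essentially writes itself once the dictionary between $\bar\gamma$ and $(r,s)$ from Lemmas~\ref{Lgamma} and~\ref{L:bijection} is invoked.
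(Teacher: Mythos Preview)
Your proposal is correct and takes essentially the same approach as the paper, which simply states that Proposition~\ref{Tpurity3} is a special case of Proposition~\ref{Tpurity}. You have carried out the bookkeeping explicitly and accurately: identifying $\dim(\bar\calt_{3,\bar\gamma})=g-1$, $B(\bar\gamma)=B(r,s)$, and the values of $e$ and $\epsilon$ in each congruence class, then substituting into \eqref{Elowerbound}.
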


We first consider the case of maximal $p$-rank.
Define $B(r,s)=g$ if $p \equiv 1 \bmod 3$ and $B(r,s)=2\min\{r,s\}$ if $p \equiv 2 \bmod 3$.
By Theorem~\ref{LboundB}, the $p$-rank of a trielliptic curve of signature $(r,s)$ satisfies $f \leq B(r,s)$.

\begin{proposition} \label{Pf=B} [Bouw]
If $p \not = 3$, then there exists a smooth trielliptic curve with signature $(r,s)$ and $p$-rank 
$f_{\rm max}:=B(r,s)$.
The $p$-rank $f_{\rm max}$ strata $\calt_{(r,s)}^{f_{\rm max}}$ 
is open and dense in $\calt_{(r,s)}$.
\end{proposition}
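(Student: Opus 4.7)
The plan is to split the proposition into two parts: first the existence of a smooth trielliptic curve with signature $(r,s)$ achieving the $p$-rank $f_{\max}=B(r,s)$, and then the openness and density of the $p$-rank $f_{\max}$ stratum.

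For existence, I would appeal directly to Bouw's theorem (Theorem \ref{LboundB}). Since $\ell = 3$ and $p \neq 3$, every prime $p$ satisfies $p \equiv \pm 1 \bmod 3$, so we are in one of the cases of Theorem \ref{LboundB} where the Bouw upper bound is attained. Hence there exists a smooth $\ZZ/3\ZZ$-cover of $\Proj^1$ with inertia type $\bar\gamma$ (and therefore with signature $(r,s)$, by Lemma \ref{L:bijection}) whose $p$-rank equals $B(\bar\gamma)$. Before invoking this, I would verify that $B(\bar\gamma)$ as defined in \eqref{Ebound} indeed agrees with $B(r,s)$ as defined just before Example \ref{Eg=1}: if $p \equiv 1 \bmod 3$ then $e=1$ and each of $\LL_1,\LL_2$ is its own $\C$-orbit, so $B(\bar\gamma)=s_1+s_2=g$; if $p \equiv 2 \bmod 3$ then $e=2$, the two eigenspaces form a single orbit, and $B(\bar\gamma)=2\min(s_1,s_2)=2\min(r,s)$.

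For openness and density, I would use the Katz semi-continuity theorem \cite[Theorem 2.3.1]{katzsf}: the locus in $\calt_{(r,s)}$ where the $p$-rank is at least $m$ is open for every $m$. Taking $m=f_{\max}$ and using that $f_{\max}$ is the maximum $p$-rank occurring on $\calt_{(r,s)}$ by Theorem \ref{LboundB}, the locus $\calt_{(r,s)}^{f_{\max}}$ coincides with the open locus $\{f\geq f_{\max}\}$ and is therefore open. By Lemma \ref{Lmoduli}(4), $\bar\calt_{3,\bar\gamma}=\bar\calt_{(r,s)}$ is irreducible, hence $\calt_{(r,s)}$ is irreducible, so the non-empty open substack $\calt_{(r,s)}^{f_{\max}}$ is automatically dense.

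There is no real obstacle here: the proposition combines Bouw's attainment result with Katz semi-continuity and the irreducibility of $\calt_{(r,s)}$, all of which have been established in the preceding sections. The only point requiring care is the bookkeeping check that Bouw's combinatorial bound $B(\bar\gamma)$ reduces to the explicit value $B(r,s)$ under both congruence classes of $p$ modulo $3$.
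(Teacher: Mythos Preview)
Your proposal is correct and matches the paper's own proof: the paper cites Bouw's Propositions 7.4 and 7.8 (exactly the $p\equiv\pm 1\bmod\ell$ cases you invoke via Theorem~\ref{LboundB}) for existence, and then derives openness and density from the irreducibility of $\calt_{(r,s)}$ together with lower semi-continuity of the $p$-rank. Your additional bookkeeping check that $B(\bar\gamma)=B(r,s)$ is a helpful clarification that the paper leaves implicit.
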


\begin{proof}
The first statement is a special case of \cite[Propositions 7.4, 7.8]{Bouw}. 
The second statement follows since $\calt_{(r,s)}$ is irreducible and the $p$-rank is lower 
semi-continuous.
\end{proof}

\subsection{Base cases} \label{Sbasecases}

Moonen proved there are exactly 20 families of cyclic covers of ${\mathbb P}^1$ that are \emph{special}, 
meaning that the image of the family under the Torelli morphism is open and dense in the 
associated PEL type Shimura variety;  
these are listed as $M[1]-M[20]$ in \cite[Table 1]{moonen}.
In \cite[Sections 4-6]{LMPT2}, the authors computed the Newton polygons occurring on these families.
In \cite[Theorem 5.11]{LMPT2} and \cite[Theorem 7.1]{LMPT3}, they proved that each of these Newton polygons occurs 
for the Jacobian of a smooth curve in the family, except possibly the supersingular ones when $p$ is small.  

For trielliptic covers, there are three families that are special: $M[3]$, $M[6]$, and $M[10]$. 
Since the $p$-rank is an invariant of the Newton polygon, 
we can find the dimension of the $p$-rank strata of these families.
When the Newton polygon is supersingular (which happens only when $f=0$), we can 
remove the requirement that $p>>0$ in all but one case.

The results below for the signature $(r,s)$ are also true for the signature $(s,r)$. 

\begin{lemma} \label{Lbasecase}
\begin{enumerate}
\item $M[3]$ (Signature $(1,1)$, inertia type $\bar \gamma =(1,1,2,2)$).

If $p \geq 5$ and $f=0$, then $\calt_{(1,1)}^0$ is non-empty of dimension $0$.

\item $M[6]$ (Signature $(2,1)$, inertia type $\bar \gamma =(1,1,1,1,2)$).

If $p \equiv 1 \bmod 3$ and $f = 0,1$, then $\calt_{(1,2)}^f$ is non-empty of dimension $f$.

If $p \equiv 2 \bmod 3$ and $f=0$, then $\calt_{(1,2)}^f$ is non-empty of dimension $1$.

\item $M[10]$ (Signature $(3,1)$, inertia type $\bar \gamma =(1,1,1,1,1,1))$.

If $p \equiv 1 \bmod 3$ and $f=0,1,2$, then $\calt_{(1,3)}^f$ is non-empty of dimension $f$ (if $p>>0$ when $f=0$).

If $p \equiv 2 \bmod 3$ and $f=0$, then $\calt_{(1,3)}^0$ is non-empty of dimension $2$.
\end{enumerate}
\end{lemma}

\begin{proof}
The Newton polygons for the curves in the family are listed on \cite[page 19]{LMPT2}.
\begin{enumerate}
\item When $f=0$, then the Newton polygon of a curve in the family is supersingular.   
If $p \equiv 1 \bmod 3$, the result follows from \cite[Theorem 5.11]{LMPT2}.
The main idea is that, because of Example~\ref{Eg=1}, a supersingular 
curve in the family must be smooth.

When $p \equiv 2 \bmod 3$ is odd, the result follows from Lemma~\ref{Lcalculateg2} below
(or \cite[Theorem 7.1]{LMPT3} when $p>>0$).

\item When $p \equiv 1 \bmod 3$, the result follows from \cite[Theorem 5.11]{LMPT2};
note that the Newton polygon has slopes $1/3$ and $2/3$ when $f=0$. 

When $p \equiv 2 \bmod 3$, the result follows from \cite[Theorem 7.1]{LMPT3} when $p>>0$.
Here is an argument that removes the hypothesis $p >>0$.
Let $S$ be an irreducible component of the $p$-rank $0$ locus $\bar \calt_{(2,1)}^0$.  
Then ${\rm dim}(S)=1$ because $\bar \calt_{(2,1)}$ has dimension $2$ 
and its generic geometric point represents a curve with $p$-rank $2$.
The intersection of $S$ with the boundary is contained in $\kappa_{1,2}(\til{\calt}_{(1,0)}^0 \times \til{\calt}_{(1,1)}^0)$, 
but that only has dimension $0$, so the generic geometric point of $S$ represents a smooth curve. 

 \item 
When $p \equiv 1 \bmod 3$, the result follows from \cite[Theorem 7.1]{LMPT3};
we do not know how to remove the hypothesis $p>>0$ when $f=0$. 
 
When $p \equiv 2 \bmod 3$, then $\calt_{(1,3)}^0$ is non-empty of dimension $2$
by \cite[Theorem 5.11]{LMPT2}.
\end{enumerate}
\end{proof}

For the family $M[10]$, 
we consider the PEL type Shimura variety ${\rm Sh}$ attached to the data of $\ell =3$ 
and the signature type $(3,1)$.  As in Section~\ref{Sshimura}, 
let $\Sigma$ be the irreducible component of ${\rm Sh}$ which contains the Torelli locus.

\begin{proposition}\label{Pirreducible}
Suppose $p \equiv 2 \bmod 3$.
For the family $M[10]$, 
the $p$-rank $0$ stratum $\Sigma^0$ of $\Sigma$ is irreducible
and thus $\calt_{(1,3)}^0$ is irreducible.
\end{proposition}

\begin{proof}
For each generic geometric point of $\Sigma^0$, we consider the Newton polygon 
$\nu$ of the abelian variety represented by this point.
Applying the Kottwitz method, see \cite[Section 4.3 and table on page 19]{LMPT2}, 
shows that $\nu$ has slopes $1/4$ and $3/4$ when $p \equiv 2 \bmod 3$; 
in particular, it is not supersingular.
The hypotheses of \cite[Theorem~1.1]{achter14} are satisfied;
the conclusion of that result is that the stratum of $\Sigma$ with Newton polygon $\nu$ 
is irreducible.
Since this stratum is open and dense in $\Sigma^0$, this implies that $\Sigma^0$ is irreducible.

Since the family $M[10]$ is special, the image of $\calt_{(3,1)}$ is open and dense in $\Sigma$.
It follows that $\calt_{(3,1)}^0$ is open and dense in $\Sigma^0$.
Thus $\calt_{(3,1)}^0$ is irreducible as well. 
\end{proof}

\subsection{Trielliptic curves whose $p$-rank is not maximal}

The next result extends Proposition~\ref{Pf=B} by 
showing for each prime $p \geq 5$, that there exist trielliptic curves of each signature type $(r,s)$
whose $p$-rank is not the maximum $B(r,s)$.
Recall that $B(r,s) = 2 {\rm min}\{r,s\}$ when $p \equiv 2 \bmod 3$ 
and $B(r,s)=r+s$ when $p \equiv 1 \bmod 3$. 

\begin{proposition} \label{Cf=B-2}
Let $p \geq 5$ and $g \geq 2$.
Let $(r,s)$ be a trielliptic signature for $g$. 
Then $\calt_{(r,s)}^{B(r,s)-2}$ is non-empty and each of its irreducible components has 
dimension $g-2$ (codimension $1$ in $\calt_{(r,s)}$).
Thus there exists a smooth trielliptic curve with signature $(r,s)$ and $p$-rank $f=B(r,s)-2$.
\end{proposition}

\begin{proof} 
Recall that ${\rm dim}(\calt_{(r,s)})=g-1$ and the generic geometric point of $\calt_{(r,s)}$ 
represents a trielliptic curve with $p$-rank $B(r,s)$ by Proposition~\ref{Pf=B}.
If $\calt_{(r,s)}^{B(r,s)-2}$ is non-empty, then, by definition, each of its generic geometric points represents 
a smooth trielliptic curve with signature $(r,s)$ and $p$-rank $f=B(r,s)-2$.
Furthermore, if ${\mathcal S}$ is one of the irreducible components of $\calt_{(r,s)}^{B(r,s)-2}$, 
then ${\rm dim}({\mathcal S}) \leq g-2$ and Proposition~\ref{Tpurity3} 
implies that ${\rm dim}({\mathcal S}) \geq g-2$, 
so ${\rm dim}(S)=g-2$.

It thus suffices to prove that $\calt_{(r,s)}^{B(r,s)-2}$ is non-empty.
Without loss of generality, suppose $r \leq s$.
The proof is by induction on $r$.

If $r=1$, then $1 \leq s \leq 3$.
If $r=1$ and $s=1$, then $B(1,1)-2=0$ and the result follows from Lemma~\ref{Lbasecase}(1) 
(deferred to Lemma~\ref{Lcalculateg2}(1) when $p \equiv 2 \bmod 3$).
If $r=1$ and $s=2$, then $B(1,2)-2=1$ when $p \equiv 1 \bmod 3$ and $B(1,2)-2=0$ when $p \equiv 2 \bmod 3$
and the result follows from Lemma~\ref{Lbasecase}(2).
If $r=1$ and $s=3$, then $B(1,3)-2=2$ when $p \equiv 1 \bmod 3$ and $B(1,3)-2=0$ when $p \equiv 2 \bmod 3$
and the result follows from Lemma~\ref{Lbasecase}(3).

Now suppose $2 \leq r \leq s$.

\smallskip

{\bf Case 1:} Suppose $s \leq 2r$.  Then $(r-1,s-1)$ is a valid trielliptic signature.
Note that $B(r-1,s-1)=B(r,s)-2$.
Let ${\mathcal S}_1$ be an irreducible component of $\calt_{(1,1)}^0$, 
which is non-empty when $p \geq 5$ by Lemma~\ref{Lbasecase}(1).
Let ${\mathcal S}_2$ be an irreducible component of $\calt_{(r-1,s-1)}^{B(r-1,s-1)}$, which is non-empty by Proposition~\ref{Pf=B}.
Consider an irreducible component $\til {\mathcal S}_1$ of $\til \calt_{(1,1)}^0$ lying above ${\mathcal S}_1$
and an irreducible component $\til {\mathcal S}_2$ of $\til \calt_{(r-1,s-1)}^{B(r-1,s-1)}$ lying above ${\mathcal S}_2$.

When $(r,s)=(1,1)$, then $d_1=d_2=2$ are both positive.
Thus without loss of generality, we can choose $\til {\mathcal S}_1$ (the labeling of the ramification points) 
so that the clutching situation below is admissible:
\[\kappa_{2, g-2}:\til{{\mathcal S}}_1 \times \til{{\mathcal S}}_2 \to \bar \calt_{(r,s)}^{B(r,s) - 2}.\]
Let $K=\kappa_{2, g-2}(\til{{\mathcal S}}_1 \times \til{{\mathcal S}}_2)$.
By construction, $K$ is contained in $\Delta_2[\bar \calt_{(r,s)}^{B(r,s) - 2}]$.

Let $W$ be an irreducible component of $\bar \calt_{(r,s)}^{B(r,s) - 2}$ which contains $K$.
By the same reasoning as the first paragraph of the proof, ${\rm dim}(W) = g-2$.
On the other hand, since ${\rm dim}(\til{{\mathcal S}}_1)=0$ and ${\rm dim}(\til \calt_{(r-1,s-1)}) = g-3$, 
it follows that 
\[{\rm dim}(K) = {\rm dim}(\til{{\mathcal S}}_1) + {\rm dim}(\tilde{\calt}_{(r-1,s-1)}) = g-3.\]
Thus the generic point of $W$ is not contained in $K$.

By construction, the generic point of ${\mathcal S}_1$ represents a smooth curve.
The generic point of $\bar{\calt}_{(r-1,s-1)}^{B(r-1,s-1)}$ represents a smooth curve by Proposition~\ref{Pf=B}
and Lemma~\ref{lem:divisorboundary}.
So the generic point of $W$ is not contained in any other boundary component.
Thus the generic point of $W$ represents a smooth curve
and $\calt_{(r,s)}^{B(r,s)-2}$ is non-empty, with irreducible components of dimension $g-2$.

\smallskip

{\bf Case 2:} Suppose $s = 2r+1$.   Then $(r-1,s-2)$ is a valid trielliptic signature.
Note that $B(r-1,s-2)=2(r-1)$ when $p \equiv 2 \bmod 3$ and $B(r-1,s-2)=g-3$ when $p \equiv 1 \bmod 3$.
Let $f'=0$ when $p \equiv 2 \bmod 3$ and $f'=1$ when $p \equiv 1 \bmod 3$.
Then $f'+ B(r-1,s-2)=B(r,s)-2$.

Let ${\mathcal S}_1$ be an irreducible component of $\calt_{(1,2)}^{f'}$, 
which is non-empty by Lemma~\ref{Lbasecase}(2).
Let ${\mathcal S}_2$ be an irreducible component of $\calt_{(r-1,s-2)}^{B(r-1,s-2)}$, 
which is non-empty by Proposition~\ref{Pf=B}.

When $(r,s)=(1,2)$, then $d_1=1$ and $d_2=4$, which are both positive.
We repeat the argument above, making an admissible clutching of the following form:
\[\kappa_{3, g-3}:\til{{\mathcal S}}_1 \times \til{{\mathcal S}}_2 \to \bar \calt_{(r,s)}^{B(r,s) - 2}.\]
The rest of the proof is the same.
\end{proof}

\begin{remark}
When $p=2$, then $\calt_{(1,1)}^0$ is empty, as shown in Lemma~\ref{Lcalculateg2}(2).
When $p=2$, it is still true that $\bar{\calt}_{(1,1)}^0$ is non-empty (of dimension $0$); 
the same proof as for Proposition~\ref{Cf=B-2} 
shows that $\bar{\calt}_{(r,s)}^{B(r,s)-2}$ is non-empty and each of its irreducible components has 
dimension $g-2$ (codimension $1$ in $\bar{\calt}_{(r,s)}$), but it is not clear whether any of its 
points represents a smooth curve.

The case when $p=3$ is described in Proposition~\ref{Pwild3}.
\end{remark}

\subsection{Existence of trielliptic curves with given $p$-rank}

In this section, suppose $p \equiv 2 \bmod 3$. 
In this case, the necessary conditions on the $p$-rank
are that $f$ is even and $0 \leq f \leq 2{\rm min}(r,s)$.   
For every signature type and for all odd $p \equiv 2 \bmod 3$, 
we prove that every $p$-rank $f$ satisfying the necessary conditions occurs for a smooth 
trielliptic curve of that signature in characteristic $p$.
If ${\mathcal S}$ is an irreducible component of $\bar \calt_{(r,s)}^f$, recall from Proposition~\ref{Tpurity3} 
that ${\rm dim}({\mathcal S}) \geq {\rm max}\{r,s\} -1 + f/2$.  

\begin{theorem} \label{Trsexistence}
Let $p \equiv 2 \bmod 3$ be odd and let $g \geq 2$.  Let $(r,s)$ be a trielliptic signature for $g$.
Suppose $0 \leq f \leq 2{\rm min}\{r,s\}$ is even.
Then there exists a smooth trielliptic curve of genus $g$ defined over $\bar{\FF}_p$ 
with signature type $(r,s)$ and $p$-rank $f$.
More generally, $\calt_{(r,s)}^f$ is non-empty and contains an irreducible component ${\mathcal S}={\mathcal S}_{(r,s)}^f$ 
with $\dim({\mathcal S}) = \max\{r,s\}-1 +f/2$. 
\end{theorem}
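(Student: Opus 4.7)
The plan is to prove Theorem \ref{Trsexistence} by induction on $g$. The base case $g=2$ has only the signature $(r,s)=(1,1)$ with $f\in\{0,2\}$: the case $f=2=B(1,1)$ is Proposition \ref{Pf=B} (giving dimension $g-1=1 = \max -1 + f/2$), and $f=0$ is Lemma \ref{Lcalculateg2} (giving dimension $0 = \max -1 + f/2$). For $g\geq 3$ and a fixed valid triple $(r,s,f)$, we may assume $r\leq s$. The case $f=2r=B(r,s)$ is immediate from Proposition \ref{Pf=B}, and the case $f=2r-2=B(r,s)-2$ is Proposition \ref{Cf=B-2}; both give components of the correct dimension. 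The remaining case is $0\leq f\leq 2r-4$, forcing $r\geq 2$, and this is where the new argument is needed.

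For $f\leq 2r-4$, I construct the desired component via a clutching $\kappa$ of two smaller smooth trielliptic covers furnished by the inductive hypothesis. When $r<s$, I clutch the unique smooth trielliptic elliptic $E_0$ of signature $(0,1)$ and $p$-rank $0$ (Example \ref{Eg=1}) to a smooth component $\mathcal{S}_2$ of $\calt_{(r,s-1)}^{f}$; the inductive hypothesis applies because $(r,s-1)$ is a valid signature for $g-1$ and $f\leq 2r=2\min(r,s-1)$, yielding $\dim \mathcal{S}_2 = s-2+f/2$. The clutching is admissible at a type-$1$ branch of $E_0$ (all three are available) and a type-$2$ branch of $\mathcal{S}_2$ (there are $d_2=2r-s+2\geq 1$ of them). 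When $r=s$, I follow the template of Proposition \ref{Cf=B-2} and clutch a smooth point of $\calt_{(1,1)}^{0}$ (Lemma \ref{Lcalculateg2}) to a smooth component $\mathcal{S}_2$ of $\calt_{(r-1,s-1)}^{f}$ (of dimension $r-2+f/2$ by the inductive hypothesis). In both cases the clutching image $K\subset\bar\calt_{(r,s)}^{f}$ lies in $\Delta_j$ for some $j\in\{1,2\}$ and has dimension $\max(r,s)-2+f/2$, one less than the claimed lower bound.

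By Proposition \ref{Tpurity3}, any irreducible component $W$ of $\bar\calt_{(r,s)}^{f}$ containing $K$ satisfies $\dim W \geq \max(r,s)-1+f/2 = \dim K+1$, so the generic point $\eta$ of $W$ lies strictly outside $K$. To finish, I argue as in Proposition \ref{Cf=B-2} that $\eta$ represents a smooth curve: the generic curves of both clutched factors are smooth (by the inductive hypothesis together with Example \ref{Eg=1} or Lemma \ref{Lcalculateg2}), so the generic curve of $K$ has exactly one node. Since the signature data of the two components pins down the clutching image uniquely, no other boundary image of $\bar\calt_{(r,s)}^{f}$ can contain $K$ in its closure, forcing $\eta\notin\delta\calt_{(r,s)}$. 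To pin down $\dim W = \max(r,s)-1+f/2$ exactly, I combine Proposition \ref{prop:upperbound}, which gives $\dim W \leq \dim(\Delta_j[W])+1$, with a careful upper-bound analysis of the clutching images making up $\Delta_j[\bar\calt_{(r,s)}^{f}]$, using the inductive hypothesis to control their dimensions. The main obstacle will be this last step: ruling out the a priori possibility that $W$ lies in some other $\Delta_i$- or $\Xi_i$-component of dimension exceeding the lower bound. This requires systematically enumerating the admissible pairs of signatures and $p$-ranks on either side of each clutching and checking in each case that either the generic signature on the genus-$1$ or genus-$2$ factor forces the image to be disjoint from $K$, or that dimension considerations rule the image out.
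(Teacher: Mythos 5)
Your overall strategy is the paper's: build a locus $K$ of dimension $\max(r,s)-2+f/2$ in the boundary of $\bar{\calt}_{(r,s)}^f$ by clutching two smaller strata, get the lower bound $\dim W\geq \max(r,s)-1+f/2$ from Proposition \ref{Tpurity3}, match it with an upper bound, and then argue the generic point of $W$ is smooth. The differences in set-up are real but minor: you induct on $g$ and peel off a signature-$(0,1)$ elliptic tail when $r<s$ (or a $(1,1)$ piece when $r=s$), whereas the paper inducts on $\min(r,s)$ and always peels off a genus-$2$ piece of signature $(1,1)$ or a genus-$3$ piece of signature $(1,2)$, so that the large factor has signature $(r-1,s-1)$ or $(r-1,s-2)$. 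Your admissibility checks, dimension counts for $K$, and handling of the extreme cases $f=2r$, $f=2r-2$ are correct.

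The genuine gap is the one you flag yourself, the upper bound $\dim W\leq \max(r,s)-1+f/2$, and your proposed route to it does not close. First, you plan to bound the clutching images in $\Delta_j[\bar{\calt}_{(r,s)}^f]$ ``using the inductive hypothesis to control their dimensions,'' but the inductive hypothesis (the theorem itself) only produces \emph{one} component of each smaller stratum of the expected dimension; it gives no upper bound on the other components, and Remark \ref{Rwhynot} states explicitly that the authors cannot prove all components have that dimension. Second, and more concretely, your elliptic-tail construction places $K$ in $\Delta_1$, and $\Delta_1[\bar{\calt}_{(r,s)}^f]$ contains a competing locus you have not accounted for: the image under $\kappa_{1,g-1}$ of $\til{\calt}^0_{(1,0)}\times\til{\calt}^f_{(r-1,s)}$, obtained by attaching the elliptic tail at a branch point of the opposite inertia generator. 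Whenever $(r-1,s)$ is a valid trielliptic signature (i.e.\ $s\leq 2r-1$) and $f\leq 2r-4$, this locus is nonempty and, by the inductive hypothesis itself, contains a component of dimension $\max(r-1,s)-1+f/2=\max(r,s)-1+f/2$ --- already equal to the target dimension of $W$. So Proposition \ref{prop:upperbound} applied to $\Delta_1$ can at best give $\dim W\leq \max(r,s)+f/2$ unless you show $W$ avoids that locus, which is neither addressed nor obviously true (it could lie in the same irreducible component $W$). The paper's choice of decomposition is designed to avoid exactly this: gluing on a genus-$2$ piece forces its signature to be $(1,1)$, so the only loci in $\Delta_{g-2}[\bar{\calt}_{(r,s)}^f]$ come from the $p$-rank splittings $(f,0)$ and $(f-2,2)$ on $(r-1,s-1)\times(1,1)$, all of expected dimension at most $\max(r,s)-2+f/2$ (and similarly for the genus-$3$ piece when $s=2r+1$, where the competing signature $(r-2,s-1)$ is not admissible). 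To repair your argument you would have to restrict the genus-$1$ peeling to $s\in\{2r,2r+1\}$ and revert to a genus-$2$ or genus-$3$ peeling otherwise, at which point you have essentially reconstructed the paper's case division.
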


\begin{proof}
The first statement about the existence of the trielliptic curve 
with signature type $(r,s)$ and $p$-rank $f$ is equivalent to the statement that $\calt_{(r,s)}^f$ is non-empty.

To prove this, without loss of generality, suppose $r \leq s$.
The proof is by induction on $r$, with the result being true for $r=1$ by Proposition~\ref{Pf=B} 
when $f=2$ and Lemma~\ref{Lbasecase} when $f=0$.
Suppose the result is true for all trielliptic signatures $(r_1,s_1)$ with $1 \leq r_1 < r$.

Let $(r_2,s_2)$ be either (i) $(1,2)$ or (ii) $(1,1)$, with choice (i) mandated if $s=2r+1$ and choice (ii) 
mandated if $s=r$. Let $g_2=r_2+s_2$.
Let $r_1=r-r_2$ and $s_1=s-s_2$ and $g_1=r_1+s_1$.  
Note that $(r_1,s_1)$ is a trielliptic signature for $g_1$ and $r_1 \leq s_1$.

By the hypothesis, $0 \leq f \leq 2r$ is even.
Let $f_2$ be either (a) 2 or (b) 0, with choice (a) mandated if $f=2r$ and choice (b) mandated if $f=0$.
Let $f_1=f-f_2$.
Then $0 \leq f_1 \leq 2r_1$ and $f_1$ is even.

It follows that $\calt_{(r_i,s_i)}^{f_i}$ is non-empty and contains an irreducible component ${\mathcal S}_i$ with 
$\dim({\mathcal S}_i) = s_i-1 +f_i/2$ (by the inductive hypothesis when $i=1$, 
Propositions~\ref{Pf=B} and \ref{Cf=B-2} when $i=2$).
One can add a labeling of the smooth ramification points
by choosing an irreducible component $\tilde{{\mathcal S}}_i$ of $\tilde{\calt}_{(r_i,s_i)}$ above ${\mathcal S}_i$.

By construction,
$K=\kappa_{g_1, g_2}(\tilde{{\mathcal S}}_1 \times \tilde{{\mathcal S}}_2)$ is contained in 
$\overline{\calt}_{(r,s)}^f$ and $${\rm dim}(K)={\rm dim}({\mathcal S}_1) + {\rm dim}({\mathcal S}_2) = s - 2 +f/2.$$
Then $K$ is contained in a component $W$ of $\overline{\calt}_{(r,s)}^f$.  
By Proposition~\ref{Tpurity3}, ${\rm dim}(W) \geq s-1 + f/2$.
By Proposition~\ref{prop:upperbound}, ${\rm dim}(W) \leq s-1 + f/2$.
Thus ${\rm dim}(W) = s-1 + f/2$.

Finally, the generic point of $W$ is not contained in $K$.  
Since the generic points of ${\mathcal S}_1$ and ${\mathcal S}_2$ represent smooth curves
(this requires the hypothesis $p \not = 2$ for case (ii)), 
the generic point of $W$ is not contained in any other boundary component.
Thus the generic point of $W$ represents a smooth curve.
It follows that ${\mathcal S}=W \cap \calt_{(r,s)}^f$ is open and dense in $W$ and thus is 
non-empty with dimension $s-1 + f/2$.
\end{proof}

\begin{remark} \label{Rwhynototherp}
When $p \equiv 1 \bmod 3$, 
we were not able to prove an analogue of Theorem~\ref{Trsexistence}.
Then main reason is that if $f< g$ and 
if ${\mathcal S}$ is an irreducible component of $\bar \calt_{(r,s)}^f$, 
then Proposition~\ref{Tpurity3} states that  
${\rm dim}({\mathcal S}) \geq f$.  When $f=0$, the expected dimension is $0$,  
which makes it difficult to work with the $p$-rank $0$ stratum inductively.
\end{remark}

\section{Cases where all $p$-rank $0$ strata have the same dimension }

Suppose $p \equiv 2 \bmod 3$ is odd.  Let $g \geq 2$.  Let $(r,s)$ be a trielliptic signature for $g$.
Let $f$ be an even integer such that $0 \leq f \leq 2{\rm min}\{r,s\}$.
If ${\mathcal S}$ is an irreducible component of $\calt_{(r,s)}^f$, then $\dim({\mathcal S}) \geq \max\{r,s\}-1 +f/2$
by Proposition~\ref{Tpurity3}.

We proved in Theorem~\ref{Trsexistence} that 
$\calt_{(r,s)}^f$ is non-empty and contains an irreducible component
with $\dim({\mathcal S}) = \max\{r,s\}-1 +f/2$.
Motivated by a result in the hyperelliptic case \cite[Proposition 2]{GP},
we tried to prove that \emph{every} component of $\calt_{(r,s)}^f$ has the same dimension. 
This is true when ${\rm min}\{r, s\} =1$ by Lemma~\ref{Lbasecase}.
In this section, we prove it is also true when ${\rm min}\{r, s\} = 2$, see Corollary~\ref{Cr2total}.

Here are some of the reasons the trielliptic case is more difficult than the hyperelliptic case.
First, it is possible that there are components of $\bar{\calt}_{r,s}^f$ that are fully contained in the boundary.
This does not happen in the hyperelliptic case by \cite[Lemma~3.2]{APhypmono}.
We describe this in Section~\ref{Sboundaryproblem}.

Second, in the hyperelliptic case, 
every irreducible component of the $p$-rank $0$ stratum $\bar{\calh}^0_g$ 
contains the moduli point of a 
tree of $g$ (supersingular) elliptic curves \cite[Theorem 3.11(c)]{APhypmono}.
The analogous result in the trielliptic case is weaker.
By Theorem~\ref{Cbreakupf=0}, every irreducible component of 
$\bar{\calt}_{(r,s)}^0$ contains the moduli point of a tree of 
${\rm max}\{r,s\}$ trielliptic curves, but ${\rm max}\{r,s\}$ is strictly less than $g$.

\subsection{Balanced degenerations}

In this section, we introduce balanced degenerations which are helpful for finding an upper bound for the dimension of irreducible components.
The reason for the balanced condition is that $B(r,s)$ is not additive in general.
When $p \equiv 2 \bmod 3$ and $r \leq s$, then $B(r,s)=B(r_1,s_1)+B(r_2,s_2)$ if and only if $r_1 \leq s_1$ and $r_2 \leq s_2$.

\begin{definition} \label{Ddegenerate}
Let $S$ be an irreducible component of $\bar \calt^f_{(r,s)}$ with $r \leq s$.
We say $S$ {\em degenerates} to $\Delta((r_1,s_1)^{f_1}, (r_2, s_2)^{f_2})$ if
$S$ intersects $\kappa(\tilde \calt^{f_1}_{(r_1,s_1)} \times \tilde \calt^{f_2}_{(r_2,s_2)})$.

We say the degeneration is {\em balanced} if $r_1 \leq s_1$ and $r_2 \leq s_2$.
\end{definition}

In Definition \ref{Ddegenerate}, we implicitly require 
that $(r_1,s_1)$ and $(r_2,s_2)$ are trielliptic signatures,
that $r_1+r_2=r$ and $s_1+s_2=s$,
that $f_i$ are even with $0 \leq f_i \leq 2r_i$, 
and that $f_1 + f_2 \leq f$. 

\begin{proposition} \label{P:balanced}
Suppose $S$ has a balanced degeneration to $\Delta((r_1,s_1)^{f_1}, (r_2, s_2)^{f_2})$. 
Suppose, for $i=1,2$, that
\begin{equation} \label{hypdim}
{\rm dim}(\tilde \calt^{f_i}_{(r_i,s_i)})={\rm dim}(\bar \calt^{f_i}_{(r_i,s_i)})=s_i-1+f_i/2.
\end{equation}
Then $\dim(S)= s-1 +f/2$ and 
$S$ contains $\kappa(S_1 \times S_2)$, where $S_i$ denotes
a component of $\tilde \calt^{f_i}_{(r_i,s_i)}$ for $i=1,2$.
\end{proposition}

\begin{proof} 
By Theorem \ref{Tpurity3}, $\dim(S) \geq s - 1 +f/2$.
By Proposition \ref{prop:upperbound}, 
\[\dim(S) \leq \dim(\tilde \calt^{f_1}_{(r_1,s_1)})+ \dim(\tilde \calt^{f_2}_{(r_2,s_2)}) +1.\]
Since the degeneration is balanced, $r_i \leq s_i$.
By hypothesis,
\[{\rm dim}(\tilde \calt^{f_i}_{(r_i,s_i)})={\rm dim}(\bar \calt^{f_i}_{(r_i,s_i)})=s_i-1+f_i/2.\]
So
\[\dim(S) \leq (s_1-1 +f_1/2) + (s_2-1+f_2/2)+1 \leq s-1 +f/2.\] 

Thus $\dim(S) = s-1+f/2$. 
Furthermore, $S$ contains $\kappa(S_1 \times S_2)$
in order for equality to hold in the dimension count.
\end{proof}




There is an analogous result for the other boundary components, which we will not need in this paper.
In this case, we say $S$ {\em degenerates} to $\Xi((r_1,s_1)^{f_1}, (r_2, s_2)^{f_2})$ if
$S$ intersects $\lambda(\bar \calt^{f_1}_{(r_1,s_1);1} \times \tilde \calt^{f_2}_{(r_2,s_2);1})$.
In this case, we allow $(r_1,s_1)=(0,0)$ to be a valid trielliptic signature,
and require that $r_1+r_2=r-1$ and $s_1+s_2=s-1$, and $f_1 + f_2 \leq f-2$. 

\begin{proposition}
Suppose $S$ has a balanced degeneration to $\Xi((r_1,s_1)^{f_1}, (r_2, s_2)^{f_2})$.
For $i=1,2$, suppose \eqref{hypdim} is true.
Then $\dim(S)= s-1 +f/2$ and
$S$ contains $\lambda(S_1 \times S_2)$, where $S_i$ denotes
a component of $\bar \calt^{f_i}_{(r_i,s_i);1}$ for $i=1,2$.
\end{proposition}

\begin{proof}
The proof is almost the same as for Proposition~\ref{P:balanced}.
For a $\Xi$-degeneration, recall that $s_1+s_2=s-1$, $r_1+r_2=r-1$, and $f_1+f_2 \leq f-2$.
Marking an orbit increases the dimension by one, 
so ${\rm dim}(\bar \calt^{f_1}_{(r_1,s_1);1})=s_i+f_i/2$.
Then
\[\dim(S) \leq \dim(\bar \calt^{f_1}_{(r_1,s_1);1})+ \dim(\bar \calt^{f_2}_{(r_2,s_2);1}) +1,\]
so
\[\dim(S) \leq (s_1 +f_1/2) + (s_2+f_2/2)+1 \leq s-1 +f/2.\] 
\end{proof}

\subsection{A partial generalization of Proposition~\ref{Cf=B-2}} \label{Sapplication2}

When $f=B(r,s)-2$, then the $p$-rank $f$ stratum has codimension $1$ in 
$\calt_{(r,s)}$, by Proposition~\ref{Cf=B-2}.  
When $p \equiv 2 \bmod 3$ is odd, by Theorem~\ref{Trsexistence}, $\calt_{(r,s)}^f$
is non-empty for each $0 \leq f \leq 2{\rm min}\{r,s\}$ with $f$ even.

When $p \equiv 2 \bmod 3$ is odd, we would like to 
generalize Proposition~\ref{Cf=B-2} by showing that every component of the
$p$-rank $f=B(r,s)-4$ stratum has codimension $2$ in $\calt_{(r,s)}$.  
One reason this is hard to show is because it is not known whether 
the $p$-rank strata are nested in each other; specifically, it is not known whether  
every component of the $f=B(r,s)-4$ stratum is contained in the closure of the 
$f=B(r,s)-2$ stratum.

In the next result, we are able to extend Proposition~\ref{Cf=B-2} in this desired way
but only under the strong restriction that ${\rm min}\{r,s\}=2$.  

\begin{corollary} \label{Cr2total}
Let $p \equiv 2 \bmod 3$ be odd.  Let $(r,s)$ be a trielliptic signature with ${\rm min}\{r,s\}=2$.
Let $f=0$.
If ${\mathcal S}$ is an irreducible component of $\bar\calt_{(r,s)}^0$, then
$\dim({\mathcal S}) = \max\{r,s\}-1$.
\end{corollary}

In the rest of the section, we prove Corollary~\ref{Cr2total}. 
By symmetry, it suffices to suppose $r=2$; then $s=2,3,4,5$ and we handle these cases separately.

Corollary~\ref{Cr2total} is an application of 
Theorem~\ref{Cbreakupf=0}, which we restate in the trielliptic context:
suppose ${\mathcal S}$ is an irreducible component of the $p$-rank $0$ stratum
$\bar \calt_{(r,s)}^0$ of $\bar \calt_{(r,s)}$;
let $\sigma = {\rm dim}({\mathcal S})$; then 
there exists $\eta \in {\mathcal S}$ such that the curve 
$Y_\eta$ of compact type represented by $\eta$ is reducible,
with at least $\sigma + 1$ components,
such that the $\ZZ/3 \ZZ$-action stabilizes and acts non-trivially on each component.

\subsubsection{The case $r=2$ and $s=3$} \label{Sr=2}
 
\begin{lemma}\label{L:23}
If $S$ is an irreducible component of $\bar \calt_{(2,3)}^0$, then $\dim(S)=2$. 
\end{lemma}

\begin{proof} 
When the signature is $(2,3)$, the inertia type is $\bar{\gamma}=(1,1,2,2,2,2,2)$.

By Theorem \ref{Tpurity3}, $\dim(S) \geq 2$.
Since $\calt_{(2,3)}$ is affine, $S$ intersects either $\Delta_1=\Delta_4$ or $\Delta_2=\Delta_3$. 
If $S$ intersects $\Delta_2$, then $S$ has a balanced degeneration to $\Delta((1,1)^0, (1,2)^0)$.
By Lemma~\ref{Lbasecase}, the hypothesis in \eqref{hypdim} is true 
and so $\dim(S)=2$ by Proposition \ref{P:balanced}.

We assume that $S$ does not intersect $\Delta_2$ and that $\dim(S) \geq 3$ and find a contradiction. 
By Theorem~\ref{Cbreakupf=0}, $S$ contains a point $\eta$ representing a curve $Y_\eta$ with 
at least $4$ components.  Since $Y_\eta$ has genus $5$, it has three components of genus $1$ 
and one component $Y_0$ of genus $2$ (which is possibly reducible). 

In the dual graph of $Y_\eta$, we replace the vertex representing $Y_0$ by 
two vertices connected by a marked edge. 
This is illustrated in Figure \ref{fig:curve}: the schematic represents the four components of the curve, 
with the branch points marked by their canonical generators of inertia (the admissible condition implies that 
the two canonical generators of inertia are inverses at each ordinary double point);
the schematic in Figure  \ref{fig:23tail} represents the dual graph of $Y$.  
\begin{figure}[h]\centering 
	\caption{Picture of the singular curve $Y_\eta$ }
\begin{tabular}{c}
\includegraphics[width=0.4\linewidth]{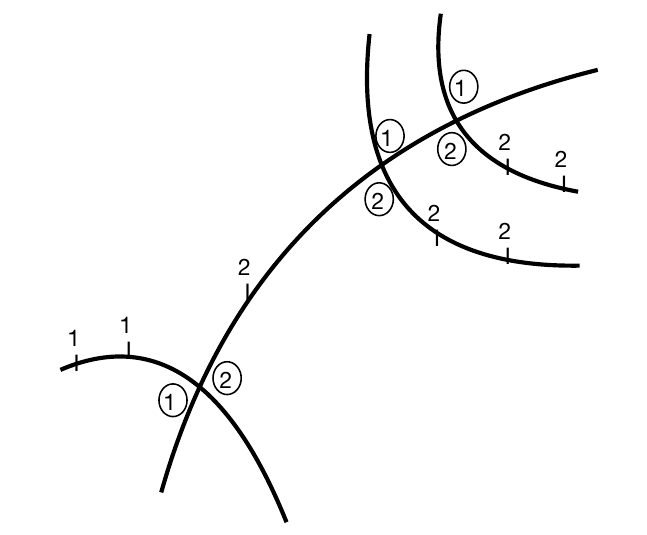} 
\end{tabular}\label{fig:curve}
\end{figure} 

\begin{figure}
	\caption{The dual graph of $Y_\eta$.}
\begin{tabular}{c}
   \xymatrix{
  {\times} \ar@{-}[dr] & {\bullet}\ar@{-}[dr] && {\bullet}\ar@{-}[dl]\\
  & {\bullet}\ar@{=}[r]  & {\times}&\\
  &111={\times}&222={\bullet}&} 
\end{tabular} \label{fig:23tail}
\end{figure}

The moduli point of $Y$ is in $\kappa(\tilde \calt^0_{(1,0)} \times \tilde \calt^0_{(1,3)})$, 
but this is not a balanced degeneration.
Note that ${\rm dim}(\tilde \calt^0_{(1,0)})=0$ and ${\rm dim}(\tilde \calt^0_{(1,3)})=2$ by Lemma~\ref{Lbasecase}.

By Proposition \ref{prop:upperbound}, ${\rm dim}(S) \leq 3$.
If ${\rm dim}(S) = 2$, the proof is complete.
If ${\rm dim}(S) =3$, then there is a component $S_1$ of $\tilde \calt^0_{(1,0)}$ 
and a component $S_2$ of $\tilde \calt^0_{(1,3)}$
such that $S$ contains $\kappa(S_1 \times S_2)$.

Consider the PEL type Shimura variety ${\rm Sh}$ attached to the data of $\ell =3$ 
and the signature type $(1,3)$;
let $\Sigma$ be the irreducible component of ${\rm Sh}$ which contains the Torelli locus;
and let $\Sigma^0$ denote the $p$-rank $0$ stratum of $\Sigma$.
By Proposition~\ref{Pirreducible}, $\Sigma^0$ is irreducible.

Consider a (labeled) tree $\eta'$ of four elliptic curves (with dual graph in Figure \ref{fig:fourec});
this is a singular trielliptic curve with signature type $(1,3)$ and $p$-rank $0$.  
Its Jacobian is represented by a point of $\Sigma^0$, 
and thus by a point in the closure of $S_2$.  
\begin{figure}[h]
 \centering
\begin{tabular}{c}
	\xymatrix{
		{\bullet} \ar@{-}[dr] &  & {\bullet}\ar@{-}[dl] \\
		{\bullet}\ar@{-}[r]  & {\times}& &\\
		} 
\end{tabular} 
 \caption{A point $\eta'$ of $\bar \calt^0_{1,3}$}
 \label{fig:fourec}
 \end{figure}
This implies that the point $\kappa(S_1 \times \eta')$ is in $\Delta_2[S]$ which is a contradiction.
\end{proof}

\subsubsection{The case $r=2$ and $s=4$} \label{Sr=2}

\begin{lemma}\label{L:24}
If $S$ is an irreducible component of $\bar \calt_{(2,4)}^0$, then $\dim(S)=3$. 
\end{lemma}

\begin{proof}
By Theorem \ref{Tpurity3}, $\dim(S) \geq 3$.
Since $\calt_{(2,4)}$ is affine, 
$S$ intersects either $\Delta_1$ or $\Delta_2$ or $\Delta_3$. 
Then $S$ has a balanced degeneration to either $\Delta((0,1)^0, (2,3)^0)$ or $\Delta((1,1)^0, (1,3)^0)$ or $\Delta((1,2)^0, (1,2)^0)$.
The result follows from Proposition~\ref{P:balanced}, 
with the hypothesis in \eqref{hypdim} verified by Lemma~\ref{Lbasecase} and Lemma~\ref{L:23}.
\end{proof}

\subsubsection{The case $(r,s)=(2,5)$:} 

\begin{lemma}\label{L:25}
If $S$ is an irreducible component of $\bar \calt_{(2,5)}^0$, then $\dim(S)=4$. 
\end{lemma}

\begin{proof} 
By Theorem \ref{Tpurity3}, $\dim(S) \geq 4$.
We assume $\dim(S) \geq 5$ and find a contradiction. 
By Theorem~\ref{Cbreakupf=0}, 
there exists a point $\eta \in S$ such that the curve $Y=C_\eta$ has at least $6$ components.
Since $Y$ has genus $7$, it has five components of genus $1$ and 
one component of genus 2 (which is possibly reducible). 

In the dual graph of $Y$, we replace the vertex representing the curve of genus $2$ by 
two vertices connected by a marked edge. 
One possibility for the dual graph is seen in Figure \ref{fig:25}.

\begin{figure}[h]\centering \caption{}
\begin{tabular}{c}
\xymatrix{
 {\bullet} \ar@{-}[r] & {\times}\ar@{-}[d]\ar@{-}[r] &{\bullet}\ar@{=}[r]& {\times}\ar@{-}[d]\ar@{-}[r]& {\bullet} \\
 & {\bullet}  && {\bullet}&} \\ 

\end{tabular} \label{fig:25}
\end{figure}   

Regardless of the location of the marked edge, $\eta$ is in $\kappa(\tilde \calt^0_{(1,2)} \times \tilde \calt^0_{(1,3)})$.
Thus $S$ has a balanced degeneration to $\Delta((1,2)^0, (1,3)^0)$.
By Proposition \ref{P:balanced}(1), $\dim(S) \leq 4$, which gives a contradiction. 
Hence $\dim(S) = 4$.
\end{proof}

\subsubsection{The case $(r,s)=(2,2)$} \label{S:22}

The situation for the trielliptic signature $(2,2)$ is more complicated;
as seen in Example~\ref{Esingularproblem}, 
$\Delta_1[\bar \calt^0_{2,2}]$ has dimension $1$, which is larger than expected.
To avoid issues raised by problem, we use some results about the Shimura variety ${\rm Sh}$
attached to the data $\ell=3$ and signature type $(2,2)$.

\begin{lemma} \label{L:22}
If $S$ is an irreducible component of $\bar \calt^0_{(2,2)}$, then $\dim(S)=1$.
\end{lemma}

\begin{proof}
If $S$ intersects $\Delta_2$, then $S$ has a balanced degeneration to $\Delta((1,1)^0, (1, 1)^0)$.  
Then $\dim(S)=1$ by Proposition \ref{P:balanced}(1).

Suppose that $S$ does not intersect $\Delta_2$.
Then $S$ intersects $\Delta_1$ and $S$ degenerates to $\Delta((1,0)^0, (1, 2)^0)$.
Then $\dim(S)=1$ or $\dim(S)=2$ by Proposition \ref{P:balanced}(2).

Assume $\dim(S) =2$.  
Under this assumption, we will prove that $S$ intersects $\Delta_2$, 
which gives a contradiction.
By Theorem~\ref{Cbreakupf=0}, $S$ contains a point $\eta$ representing a trielliptic curve $C_\eta$ 
of compact type with at least $3$ components.  
Then $C_{\eta}$ has one irreducible component of genus 2
with two elliptic tails, 
since $S$ does not intersect $\Delta_2$.

Consider the Shimura variety ${\rm Sh}$
attached to the data $\ell=3$ and signature type $(2,2)$.
Let $\Sigma$ be the irreducible component of ${\rm Sh}$ containing the image of the Torelli locus.
Let $S'$ be the image of $S$ under the embedding $\bar \calt^0_{2,2} \to \Sigma$.
Then $S'$ is contained in the supersingular locus $N$ of $\Sigma$.
Each of the supersingular components of $\Sigma$ has dimension $2$ by \cite[Theorem~B]{howardpappas}.
It follows that $S'$ is a component of $N$.
By \cite[Theorem~3.12]{howardpappas}, every component of $N$ contains a point 
$\xi$ where $N$ intersects another component of $N$.
The point $\xi$ parametrizes an abelian variety $A$ which decomposes, together with the polarization,
into a direct sum of the form $A_1 \oplus A_2$ 
where ${\rm dim}(A_i)=2$ and $\ZZ[\zeta_3] \subset {\rm End}(A_i)$.
Then $A$ is the Jacobian of a curve in $\Delta_2[\bar \calt^0_{2,2}]$, 
which gives a contradiction.
\end{proof}

\section{Concluding remarks}

In Section~\ref{Scartier}, we complete the proof of the base case 
used in some results in the paper, such as Proposition~\ref{Cf=B-2}.
In the remaining sections, we explain some of the reasons why 
Theorem~\ref{Trsexistence} is hard to generalize.

\subsection{A computational approach to the base case} \label{Scartier}

In this section, we prove that $\calt^0_{(1,1)}$ is non-empty 
when $p \equiv 2 \bmod 3$ is odd;
this completes the proof of Lemma~\ref{Lbasecase}(1).

Suppose $\psi:Y \to {\mathbb P}^1$ is a $\ZZ/\ell \ZZ$-cover.
In \cite[Sections 2-3]{Elkin}, Elkin determines formulas for the action of the 
Cartier operator $\C$ on $H^0(Y, \Omega^1)$.
The Cartier operator is a semi-linear operator and the $p$-rank is the rank of its $g$th iterate.
For small $\ell$, $g$, and $p$, it is possible to compute the $p$-rank from Elkin's work.
We note that there are several choices involved in setting up the computation of iterates of the Cartier operator;
these have led to mistakes in the literature and the reader is advised to consult 
\cite{achterhowe} to avoid repeating these.

We restrict to the case that $\ell=3$.  Without loss of generality, 
we can suppose that $\psi$ is not branched above $\infty$.  
As in \eqref{Etrielliptic}, there is an equation $y^3 =  p_1(x)p_2(x)^2$ for $\psi$, where
$p_1(x), p_2(x) \in k[x]$ are squarefree, monic, and relatively prime. 
Let $d_1={\rm deg}(p_1(x))$ and $d_2={\rm deg}(p_2(x))$.
From Lemma~\ref{L:bijection}, recall that the trielliptic signature for $\psi$ is given by 
$(r,s)$ where $r=(2d_1+d_2-3)/3$ and $s = (d_1+2d_2-3)/3$.
Also $g=d_1+d_2 - 2$.

We restrict to the case $(r,s)=(1,1)$ and $g=2$.
Then $H^0(Y, \Omega^1) = \LL_1 \oplus \LL_2$ where 
\[\LL_2=\langle w_{1,1}:=dx/y \rangle, \ \LL_1 = \langle w_{2,1}:=p_2(x) dx/y^2 \rangle.\]
We suppose that $p \equiv 2 \bmod 3$;  
then the Cartier operator $\C$ permutes $\LL_1$ and $\LL_2$.

Let $e_1=(2p-1)/3$ and $e_2 = (p-2)/3$.  Note that $e_1+e_2=p-1$.  Let
\[h_1(x)=p_1(x)^{e_1} p_2(x)^{e_2}, \ h_2(x)=p_1(x)^{e_2}p_2(x)^{e_1}.\]
When $r=s=1$, then ${\rm deg}(h_1(x)) = {\rm deg}(h_2(x)) = 2p-2$.
In this case, for $i=1,2$, 
there is only one monomial $x^e$ in $h_i(x)$ such that $e \equiv -1 \bmod p$;
let $A_i$ be the coefficient of $x^{p-1}$ in $h_i(x)$.
By a special case of \cite[Theorem 3.4]{Elkin},
\begin{equation} \label{Eelkin}
\C(\omega_{1,j}) =A_1 \omega_{2,1}, \ 
\C(\omega_{2,j}) =A_2 \omega_{1,1}. 
\end{equation}

\begin{lemma} \label{Lcalculateg2}
\begin{enumerate}
\item If $p \equiv 2 \bmod 3$ is odd, then $\calt^0_{(1,1)}$ is non-empty: there exists a smooth trielliptic curve $Y$ over $\overline{\FF}_p$ with genus $2$ and $p$-rank $0$.  
\item If $p=2$, then $\calt^0_{(1,1)}$ is empty: 
there does not exist a smooth trielliptic curve $Y$ over $\overline{\FF}_2$ with genus $2$ and $2$-rank $0$. 
\end{enumerate}
\end{lemma}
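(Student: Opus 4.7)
The plan is to compute the matrix of the Cartier operator $\C$ on $H^0(Y, \Omega^1_Y)$ directly using Elkin's formula \eqref{Eelkin} and then read off the $p$-rank. For signature $(1,1)$ we have $d_1 = d_2 = 2$, so write $Y : y^3 = p_1(x) p_2(x)^2$ with $p_1, p_2 \in k[x]$ monic squarefree coprime of degree $2$. The basis of $H^0(Y, \Omega^1_Y)$ is $\omega_{1,1} = dx/y$ and $\omega_{2,1} = p_1(x)\,dx/y^2$, and since $p \equiv 2 \bmod 3$ in both parts of the lemma, $\C$ swaps $\LL_1$ and $\LL_2$. Applying \eqref{Eelkin} with $j=1$ gives $\C(\omega_{1,1}) = c_1 \omega_{2,1}$ and $\C(\omega_{2,1}) = c_2 \omega_{1,1}$, where $c_i$ is the constant term of $f_{i,p-1}(x)$; unwinding the definition of $f_{i,p-1}$ one finds $c_i^p = [x^{p-1}] h_i(x)$. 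Because $\C$ is $p^{-1}$-linear, $\C^2(\omega_{1,1}) = c_1^{1/p} c_2\,\omega_{1,1}$ and $\C^2(\omega_{2,1}) = c_2^{1/p} c_1\,\omega_{2,1}$; iterating shows that the stable rank of $\C$, hence the $p$-rank of $Y$, is $0$ iff $c_1 c_2 = 0$, and equals $g = 2$ otherwise.

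For part (1), I would take the explicit curve $Y_0 : y^3 = (x^2-1)(x^2+1)^2$. Since $p$ is odd, the factors $p_1 = (x-1)(x+1)$ and $p_2 = x^2+1$ are separable and coprime over $\bar\FF_p$, so $Y_0$ is a smooth trielliptic genus $2$ curve of signature $(1,1)$. Setting $u = x^2$, the condition $[x^{p-1}] h_1(x) = 0$ becomes the vanishing of the coefficient of $u^{(p-1)/2}$ in $f(u) := (u-1)^a (u+1)^b$, where $a = (p-2)/3$, $b = (2p-1)/3$, and $a+b = p-1$. The key step is the self-reciprocity identity $u^{p-1} f(1/u) = (-1)^a f(u)$, obtained by substituting $u \mapsto 1/u$ and clearing denominators, which on the level of coefficients reads $[u^m] f = (-1)^a [u^{p-1-m}] f$. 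Taking $m = (p-1)/2$ forces $[u^{(p-1)/2}] f = 0$ whenever $a$ is odd and $p \neq 2$. Writing $p = 6m-1$ (the only shape of odd $p \equiv 2 \bmod 3$) gives $a = 2m-1$, always odd, so $c_1 = 0$ and the $p$-rank of $Y_0$ is $0$.

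For part (2), the exponents collapse to $(p-2)/3 = 0$ and $(2p-1)/3 = 1$, so $h_1(x) = p_2(x)$ and $h_2(x) = p_1(x)$; hence $c_i^2$ equals the coefficient of $x$ in $p_{3-i}(x)$. But a monic quadratic $x^2 + \beta x + \gamma$ over $\bar\FF_2$ is separable iff $\beta \neq 0$ (otherwise it is the square $(x + \gamma^{1/2})^2$), so both $c_1$ and $c_2$ are nonzero for every smooth $Y \in \calt_{(1,1)}$. Consequently $\C$ is a semi-linear isomorphism and the $2$-rank of every smooth trielliptic genus $2$ curve equals $g = 2$, so $\calt^0_{(1,1)}$ is empty over $\bar\FF_2$.

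The main technical observation is the self-reciprocity identity combined with the parity of $a = (p-2)/3$: for odd $p \equiv 2 \bmod 3$ the exponent $a$ is odd and the middle coefficient is forced to vanish, while in characteristic $2$ the sign $(-1)^a$ is trivial and the identity gives no vanishing — which is exactly why the two parts of the lemma go in opposite directions.
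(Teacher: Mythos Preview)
Your proof is correct and follows essentially the same approach as the paper: the same explicit curve $y^3=(x^2-1)(x^2+1)^2$ for part (1), Elkin's formula for $\C$, and the same symmetry underlying the vanishing of the middle coefficient. Your self-reciprocity identity $u^{p-1}f(1/u)=(-1)^a f(u)$ is a cleaner packaging of what the paper does by expanding $A$ as a binomial sum and pairing the $i$-th term with the $(a-i)$-th term; both arguments hinge on the parity of $a=(p-2)/3$, and your part (2) is the same computation as the paper's without the preliminary $\PGL_2$-normalization.
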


\begin{proof}
\begin{enumerate}
\item
Let $p_1(x)=x^2-1$ and $p_2(x)=x^2+1$.
We compute that 
\begin{eqnarray*}
h_1(x) & = & (x^2-1)^{e_1}(x^2+1)^{e_2} \\
& = & \left(\sum_{j=0}^{e_1} (-1)^{e_1 -j}\dbinom{e_1}{j} x^{2j} \right)
\left(\sum_{i=0}^{e_2} \dbinom{e_2}{i} x^{2i} \right).
\end{eqnarray*}
By \eqref{Eelkin}, $\C(\omega_{1,1})= A_1 \omega_{2,1}$
where $A_1$ is the coefficient of $x^{p-1}$ in $h_1(x)$.
A pair $(i,j)$ contributes to this coefficient exactly when $2i+2j=p-1$, 
or $j=(p-1)/2 -i$.
Thus
\[A_1= \sum \limits_{i=0}^{e_2} (-1)^{\frac{p+1}{6}+i}\dbinom{e_1}{\frac{p-1}{2}-i}\dbinom{e_2}{i} .\] 
The sum has an even number of terms. 
One can check that the $i$-th and $(e_2-i)$-th terms cancel, because 
$((p-1)/2 -i)+((p-1)/2 - (e_2-i))=e_1$ and because $i$ and $e_2-i$ have opposite parities.
Thus $A_1=0$.  

The same argument, with the roles of $e_1$ and $e_2$ switched, shows that $A_2=0$.
Thus $\C$ is the zero operator (implying that $Y$ is superspecial) and so $Y$ has $p$-rank $0$. 

\item Let $p=2$.  After an automorphism of ${\mathbb P}^1$, one can suppose that
$p_1(x)=x^2+x+1$ and $p_2(x)=(x-1)(x-a)$ for some $a \in \bar \FF_2$.
Then $h_1(x)=p_1(x)$ and $h_2(x)=p_2(x)$ 
so $A_1=1$ and $A_2=-(a+1)$.
The determinant of the matrix of $\C$ is $a+1$.
If $Y$ has $2$-rank $0$ then this determinant is zero, so $a \equiv 1 \bmod 2$.
Then $p_2(x)$ is not square-free so $Y$ is singular. 
\end{enumerate}
\end{proof}

Elkin provides similar formulas for any prime degree $\ell$ and inertia type $\bar{\gamma}$;
unfortunately, this does not provide a viable way to study the $p$-rank strata when $g > 2$ and $p$ is large for any $\ell$, because the equations are algebraically complicated.

\subsection{Possibility of components fully contained in the boundary} \label{Sboundaryproblem}

Here is an important fact about the $p$-rank stratification of the hyperelliptic locus.
When $p$ is odd, the generic geometric 
point of each irreducible component of $\bar{\calh}^f_g$ represents a \emph{smooth} hyperelliptic curve, 
by \cite[Lemma 3.1]{APhypmono}.

The analogue of this fact is not true in general for the trielliptic locus.
For example, when $p=2$, then every point of $\bar{\calt}^0_{(1,1)}$ represents a \emph{singular}
trielliptic curve by Lemma~\ref{Lcalculateg2}(2).
This is the key reason for the restriction that $p$ is odd in Proposition~\ref{Cf=B-2} and Theorem~\ref{Trsexistence}.

We give some other examples where this type of problem may occur.

\begin{example} \label{Esingularproblem}
Let $p \equiv 2 \bmod 3$ be odd.  Let $r=s=2$ and $f=0$.
Suppose ${\mathcal S}$ is an irreducible component of $\bar \calt_{(2,2)}^0$.
By Proposition~\ref{Tpurity3}, ${\rm dim}({\mathcal S}) \geq 1$.

Every component $\til{S}_1$ of $\til{\calt}_{(1,0)}^0$ has dimension $0$ and every
component $\til{S}_2$ of $\til{\calt}_{(1,2)}^0$ has dimension $1$. 
This means that $\Delta_1[\bar \calt_{(2,2)}^0]$ contains a component $\kappa_{1,3}(\til{S}_1 \times \til{S}_2)$ of dimension $1$.
In other words, there is a component ${\mathcal S}$ of $\bar \calt_{(2,2)}^0$
which has one of the following two problems: either
${\rm dim}({\mathcal S}) > 1$ (bigger than expected)
or the generic point of ${\mathcal S}$ is contained in the boundary of $\bar \calt_{(2,2)}^0$. 
\end{example}

The reason for the problem in Example~\ref{Esingularproblem} is that the pair $(1,0)$ and $(1,2)$ is not balanced, as in Definition~\ref{Ddegenerate}.
The inductive step for producing smooth trielliptic curves of larger genus 
and given $p$-rank works only 
when the pair of signatures is balanced.

More generally, the problem seen in Example~\ref{Esingularproblem} 
arises when $p \equiv 2 \bmod 3$ is odd if $r \leq s \leq 2r-1$ and $f \leq 2r-2$
and when $p=2$ if $3 \leq r \leq s \leq 2r -2$.

\begin{example}
Let $p \equiv 1 \bmod 3$.  Let $r=s=2$ and $f=1$.
Suppose ${\mathcal S}$ is an irreducible component of $\bar \calt_{(2,2)}^1$.
By Proposition~\ref{Tpurity3}, ${\rm dim}({\mathcal S}) \geq 1$.

Every component $\til{S}_1$ of $\til{\calt}_{(1,1)}^0$ has dimension $0$ and every
component $\til{S}_2$ of $\til{\calt}_{(1,1)}^1$ has dimension $1$. 
This means that $\Delta_1[\bar \calt_{(2,2)}^1]$ contains a component 
$\kappa_{2,2}(\til{S}_1 \times \til{S}_2)$ of dimension $1$.
In other words, there is a component ${\mathcal S}$ of $\bar \calt_{(2,2)}^1$
which has one of the following two problems: either
${\rm dim}({\mathcal S}) > 1$ (bigger than expected)
or the generic point of ${\mathcal S}$ is contained in the boundary of $\bar \calt_{(2,2)}^1$. 
\end{example}

\subsection{The wild case $p=3$}

We include the case of trielliptic covers when $p=3$.

\begin{proposition} \label{Pwild3}
Suppose $k=\overline{{\mathbb F}}_3$. 
Then there exists a $\ZZ/3 \ZZ$-cover of ${\mathbb P}^1_k$ having genus $g$ and $3$-rank $0$ if and only if $g \not \equiv 2 \bmod 3$.
For $g \geq 2$ and $1 \leq e \leq g/2$, 
there exists a $\ZZ/3 \ZZ$-cover of ${\mathbb P}^1_k$ having genus $g$ and $3$-rank $2e$.
\end{proposition}

\begin{proof}
Suppose $\phi:Y \to {\mathbb P}^1_k$ is a $\ZZ/3\ZZ$-cover.
Then $\phi$ is given by an Artin-Schreier equation of the form $y^3-y=f(x)$ for some $f(x) \in k(x)$.
Suppose ${\rm div}_\infty(f(x))=\sum_{j=0}^{e} d_j P_j$ is the pole divisor of $f(x)$.
By Artin-Schreier theory, one can suppose that $3 \nmid d_j$ for each $j$.  Let $e_j=d_j+1$.

The Riemann-Hurwitz formula and the Deuring-Shafarevich formula imply that 
the genus of $Y$ is $g=-2+\sum_{j=0}^{e} e_j$ and the $3$-rank of $Y$ is $2e$ \cite[Lemma 2.6]{PZ12:artschprank}.
Consider the $3$-rank $f$ strata ${\mathcal A}_{g}^f$ of the moduli space of genus $g$ Artin-Schreier covers of ${\mathbb P}^1_k$. 
By \cite[Theorem 1.1]{PZ12:artschprank}, the irreducible components of ${\mathcal A}_{g}^f$ are in bijection with partitions $\sum_{j=0}^{e} e_j$ of 
$g+2$ such that $e_j \not \equiv 1 \bmod 3$ and the dimension of the component is $g-1-\sum_{j=0}^{e} \lfloor \frac{d_j}{3} \rfloor$.

Thus, when $p=3$, there exists a $\ZZ/3\ZZ$-cover of ${\mathbb P}^1_k$ having genus $g$ and $3$-rank $0$ if and only if $g \not \equiv 2 \bmod 3$.
By an inductive argument, one can show:
for $g \geq 2$ and $1 \leq e \leq g/2$, there exists a partition $\sum_{j=0}^r e_j$ of $g+2$ into $r+1$ positive integers such that $e_j \not \equiv 1 \bmod 3$
and thus there exists a $\ZZ/3\ZZ$-cover of ${\mathbb P}^1_k$ having genus $g$ and $3$-rank $2r$.
\end{proof}


\bibliographystyle{plain}
\bibliography{POW316}

\def\cprime{$'$}
\begin{thebibliography}{10}

\bibitem{achter14}
Jeffrey~D. Achter.
\newblock Irreducibility of {N}ewton strata in {${\rm GU}(1,n-1)$} {S}himura
  varieties.
\newblock {\em Proc. Amer. Math. Soc. Ser. B}, 1:79--88, 2014.

\bibitem{achterhowe}
Jeffrey~D. Achter and Everett~W. Howe.
\newblock Hasse-{W}itt and {C}artier-{M}anin matrices: a warning and a request.
\newblock In {\em Arithmetic geometry: computation and applications}, volume
  722 of {\em Contemp. Math.}, pages 1--18. Amer. Math. Soc., Providence, RI,
  2019.

\bibitem{AP:tri}
Jeffrey~D. Achter and Rachel Pries.
\newblock The integral monodromy of hyperelliptic and trielliptic curves.
\newblock {\em Math. Ann.}, 338(1):187--206, 2007.

\bibitem{AP:monoprank}
Jeffrey~D. Achter and Rachel Pries.
\newblock Monodromy of the {$p$}-rank strata of the moduli space of curves.
\newblock {\em Int. Math. Res. Not. IMRN}, (15):Art. ID rnn053, 25, 2008.

\bibitem{APhypmono}
Jeffrey~D. Achter and Rachel Pries.
\newblock The {$p$}-rank strata of the moduli space of hyperelliptic curves.
\newblock {\em Adv. Math.}, 227(5):1846--1872, 2011.

\bibitem{blr}
Siegfried Bosch, Werner L{\"u}tkebohmert, and Michel Raynaud.
\newblock {\em N\'eron models}, volume~21 of {\em Ergebnisse der Mathematik und
  ihrer Grenzgebiete (3) [Results in Mathematics and Related Areas (3)]}.
\newblock Springer-Verlag, Berlin, 1990.

\bibitem{Bouw}
Irene~I. Bouw.
\newblock The {$p$}-rank of ramified covers of curves.
\newblock {\em Compositio Math.}, 126(3):295--322, 2001.

\bibitem{ekedahlhurwitz}
Torsten Ekedahl.
\newblock Boundary behaviour of {H}urwitz schemes.
\newblock In {\em The moduli space of curves ({T}exel {I}sland, 1994)}, volume
  129 of {\em Progr. Math.}, pages 173--198. Birkh\"auser Boston, Boston, MA,
  1995.

\bibitem{Elkin}
Arsen Elkin.
\newblock The rank of the {C}artier operator on cyclic covers of the projective
  line.
\newblock {\em J. Algebra}, 327:1--12, 2011.

\bibitem{FV}
Carel Faber and Gerard van~der Geer.
\newblock Complete subvarieties of moduli spaces and the {P}rym map.
\newblock {\em J. Reine Angew. Math.}, 573:117--137, 2004.

\bibitem{GP}
Darren Glass and Rachel Pries.
\newblock Hyperelliptic curves with prescribed {$p$}-torsion.
\newblock {\em Manuscripta Math.}, 117(3):299--317, 2005.

\bibitem{howardpappas}
Benjamin Howard and Georgios Pappas.
\newblock On the supersingular locus of the {${\rm GU}(2,2)$} {S}himura
  variety.
\newblock {\em Algebra Number Theory}, 8(7):1659--1699, 2014.

\bibitem{kani}
Ernst Kani.
\newblock The {G}alois-module structure of the space of holomorphic
  differentials of a curve.
\newblock {\em J. Reine Angew. Math.}, 367:187--206, 1986.

\bibitem{katzsf}
Nicholas~M. Katz.
\newblock Slope filtration of {$F$}-crystals.
\newblock In {\em Journ\'ees de {G}\'eom\'etrie {A}lg\'ebrique de {R}ennes
  ({R}ennes, 1978), {V}ol. {I}}, volume~63 of {\em Ast\'erisque}, pages
  113--163. Soc. Math. France, Paris, 1979.

\bibitem{knudsen2}
Finn~F. Knudsen.
\newblock The projectivity of the moduli space of stable curves. {II}. {T}he
  stacks {$M_{g,n}$}.
\newblock {\em Math. Scand.}, 52(2):161--199, 1983.

\bibitem{LMPT3}
Wanlin Li, Elena Mantovan, Rachel Pries, and Yunqing Tang.
\newblock Newton polygon stratification of the {T}orelli locus in {PEL}-type
  {S}himura varieties.
\newblock to appear in International Math Research Notices, available at
  arXiv:1811.00604.

\bibitem{LMPT2}
Wanlin Li, Elena Mantovan, Rachel Pries, and Yunqing Tang.
\newblock Newton polygons arising from special families of cyclic covers of the
  projective line.
\newblock {\em Res. Number Theory}, 5(1):Paper No. 12, 31, 2019.

\bibitem{maninthesis}
Ju.~I. Manin.
\newblock Theory of commutative formal groups over fields of finite
  characteristic.
\newblock {\em Uspehi Mat. Nauk}, 18(6 (114)):3--90, 1963.

\bibitem{moonen}
Ben Moonen.
\newblock Special subvarieties arising from families of cyclic covers of the
  projective line.
\newblock {\em Doc. Math.}, 15:793--819, 2010.

\bibitem{Oort}
Frans Oort.
\newblock Subvarieties of moduli spaces.
\newblock {\em Invent. Math.}, 24:95--119, 1974.

\bibitem{PZ12:artschprank}
Rachel Pries and Hui~June Zhu.
\newblock The {$p$}-rank stratification of {A}rtin-{S}chreier curves.
\newblock {\em Ann. Inst. Fourier (Grenoble)}, 62(2):707--726, 2012.
\newblock math.NT/0609657.

\bibitem{silverman}
Joseph~H. Silverman.
\newblock {\em The arithmetic of elliptic curves}, volume 106 of {\em Graduate
  Texts in Mathematics}.
\newblock Springer, Dordrecht, second edition, 2009.

\bibitem{V:stack}
Angelo Vistoli.
\newblock Intersection theory on algebraic stacks and on their moduli spaces.
\newblock {\em Invent. Math.}, 97(3):613--670, 1989.

\bibitem{Vbook}
Helmut V{\"o}lklein.
\newblock {\em Groups as {G}alois groups}, volume~53 of {\em Cambridge Studies
  in Advanced Mathematics}.
\newblock Cambridge University Press, Cambridge, 1996.
\newblock An introduction.

\end{thebibliography}
\end{document}